\documentclass[12pt]{amsart}  

\usepackage{amssymb}
\usepackage{hyperref}
\newtheorem{theorem}{Theorem}
\newtheorem{lemma}[theorem]{Lemma}
\newtheorem{proposition}[theorem]{Proposition}
\newtheorem{example}[theorem]{Example}
\newtheorem{corollary}[theorem]{Corollary}
\newtheorem{remar}[theorem]{Remark}
\newenvironment{remark}{\begin{remar}\rm}{\end{remar}}
\newcommand{\bfind}[1]{\index{#1}{\bf #1}}
\newcommand{\n}{\par\noindent}

\newcommand{\sn}{\par\smallskip\noindent}

\newcommand{\bn}{\par\bigskip\noindent}
\newcommand{\pars}{\par\smallskip}
\newcommand{\parm}{\par\medskip}

\newcommand{\ac}{^{\rm ac}}

\newcommand{\ovl}[1]{\overline{#1}}

\newcommand{\sep}{^{\rm sep}}
\newcommand{\Aut}{\mbox{\rm Aut}\,}
\newcommand{\res}{\mbox{\rm res}\,}
\newcommand{\chara}{\mbox{\rm char}\,}

\newcommand{\ic}{\mbox{\rm IC}\,}

\newcommand{\diams}{\unskip\nobreak\hfil\penalty50%
\hskip1em\hbox{}\nobreak\hfil%
$\diamondsuit$\parfillskip=0pt\finalhyphendemerits=0}

\newcommand{\Q}{\mathbb Q}

\newcommand{\Z}{\mathbb Z}
\newcommand{\F}{\mathbb F}
\newcommand{\cal}{\mathcal}

\newcommand{\cO}{{\mathcal O}}

%
%
\begin{document}
\title[Eliminating Tame Ramification]{Eliminating Tame Ramification: generalizations of Abhyankar's Lemma
}

\author{Arpan Dutta and Franz-Viktor Kuhlmann}

\address{Department of Mathematics, IISER Mohali,
	Knowledge City, Sector 81, Manauli PO,
SAS Nagar, Punjab, India, 140306.}
\email{arpan.cmi@gmail.com}

\address{Institute of Mathematics, ul.~Wielkopolska 15, 70-451 Szczecin, Po\-land}
\email{fvk@usz.edu.pl}

\date{6.\ 2.\ 2020}

\thanks{The first author would like to thank Steven Dale Cutkosky and Sudesh K. Khanduja for their support, suggestions and helpful discussions.\\
A preliminary part of this paper was written during Research in Teams at the Banff
International Research Station in 2003. The second author is very grateful for having
been able to use this great facility. He would also like to
thank Hagen Knaf for inspiring discussions.\n
Further, the authors thank the referee for many helpful suggestions and corrections.\n
The second author was partially supported by a Canadian NSERC grant and is currently supported by Opus grant 2017/25/B/ST1/01815 from the National Science Centre of Poland.}

\keywords{Valuation, elimination of ramification, ramification theory, tame extension}
\subjclass[2010]{12J20, 13A18, 12J25}

\begin{abstract}
A basic version of Abhyankar's Lemma states that for two finite extensions $L$ and $F$ of a local field $K$,
if $L|K$ is tamely ramified and if the ramification index of $L|K$ divides the ramification index of $F|K$, then
the compositum $L.F$ is an unramified extension of $F$. In this paper, we generalize the result to valued fields
with value groups of rational rank 1, and show that the latter condition is necessary. Replacing the condition
on the ramification indices by the condition that the value group of $L$ be contained in that of $F$, we
generalize the result further in order to give a necessary and sufficient condition for the elimination of tame
ramification of an arbitrary extension $F|K$ by a suitable algebraic extension of the base field $K$. In
addition, we derive more precise ramification theoretical statements and give several examples.
\end{abstract}

\maketitle
%
%
%
%
\section{Introduction}
In this paper we consider valued fields $(K,v)$, i.e., fields $K$ with a Krull valuation $v$. The valuation
ring of $v$ on $K$ will be denoted by ${\cal O}_K\,$. The value group of $(K,v)$ will be denoted by $vK$, and
its residue field by $Kv$. The value of an element $a$ will be denoted by $va$, and its residue by $av$.
By $(L|K,v)$ we denote a field extension $L|K$ where $v$ is a valuation on $L$ and $K$ is endowed with the
restriction of $v$. For background on valuation theory, see \cite{En,EP,Kubook,ZS2}. Basic facts that we will
need, in particular from ramification theory, will be presented in Section~\ref{sectprel}.

Throughout, we will consider the following general situation. We let $(M,v)$ be an arbitrary algebraically closed
extension of some valued field $(K,v)$. Every subfield $E$ of $M$ will be endowed with the restriction of $v$,
which we will again denote by $v$; note that $(M,v)$ contains a unique henselization of $(E,v)$, which we
denote by $(E^h,v)$. Further, we take an arbitrary subextension $F|K$ and an algebraic subextension $L|K$ of $M|K$.
The \bfind{compositum of the fields} $F$ and $L$ within $M$ is the smallest subfield of $M$ that contains both $F$
and $L$, and we denote it by $L.F\,$. The restriction of $v$ from $M$ to $L.F$ is then a simultaneous extension
of the restrictions to $L$ and $F$. Similarly, the \bfind{compositum of the value groups} $vF$ and $vL$ within
$vM$ is the smallest subgroup of $vM$ that contains both $vF$ and $vL$, and we denote it by $vL+vF$.

An algebraic extension $(L|K,v)$ of henselian fields is called \bfind{tame}
if every finite subextension $E|K$ of $L|K$ satisfies the following conditions:
\sn
(TE1) the ramification index $(vE:vK)$ is not divisible by $\chara Kv$.
\n
(TE2) the residue field extension $Ev|Kv$ is separable.
\n
(TE3) the extension $(E|K,v)$ is \bfind{defectless}, i.e.,
\[
[E:K]\>=\>(vE:vK)[Ev:Kv]\>.
\]
Note that the extension $(L|K,v)$ is called \bfind{tamely ramified} if (TE1) and (TE2) hold for all
finite subextensions $E|K$, so a finite tame extension is the same as a finite defectless tamely ramified
extension. The extension $(L|K,v)$ is called \bfind{unramified} if the canonical embedding of $vK$ in $vL$ is
onto and the residue field extension $Lv|Kv$ is separable; this does not necessarily imply that the extension is
defectless.

\pars
In the case of a henselian discretely valued field $(K,v)$, condition (TE3) is known to hold as soon as
$L|K$ is separable. Therefore, if in addition $\chara K=0$, then a finite extension of $(K,v)$ is tame once it
is tamely ramified. If in addition $(K,v)$ is complete, then condition (TE3) always holds.

\pars
For henselian discretely valued fields, Abhyankar's Lemma provides a sufficient condition to eliminate tame
ramification of a finite extension $(F|K,v)$ by lifting through a finite extension. In this case we can choose $M$
to be the algebraic closure of $K$, and the extension of $v$ from $K$ to $L$, $F$ and $L.F$ is uniquely determined.
\begin{theorem}\textbf{(Abhyankar's Lemma)}\label{Abhyankar lemma}
Let $(K,v)$ be a henselian discretely valued field, $(L|K,v)$ be a finite tame extension and $(F|K,v)$ a finite
extension. If the ramification index of $(L|K,v)$ divides the ramification index of $(F|K,v)$, then
the extension $(L.F/F,v)$ is unramified.
\end{theorem}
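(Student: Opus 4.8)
The plan is to reduce the statement to the structure of tame extensions via ramification theory. First I would recall that, since $(K,v)$ is henselian and discretely valued, the finite tame extension $(L|K,v)$ is defectless, so $[L:K]=(vL:vK)[Lv:Kv]$, and moreover $L$ embeds into the absolute ramification field $K^{r}$ of the tame part. Write $e=(vL:vK)$ for the ramification index; by hypothesis $e\mid e'$ where $e'=(vF:vK)$. The key classical structural fact is that the tamely ramified part of $L|K$ is generated by radicals: after the unramified part, $L$ is obtained by adjoining roots $\pi^{1/e_i}$ of uniformizers, with $\prod e_i$ accounting for the ramification index and each $e_i$ prime to $\chara Kv$. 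Since we want to control $L.F$ over $F$, I would first pass to $F^{h}=F$ (already henselian as a finite extension of a henselian field) and replace $L$ by $L^{h}$; everything is then inside $M$ with $v$ uniquely extending at each stage.

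Next, the reduction step. I would split $L|K$ into its maximal unramified subextension $L_0|K$ and then the purely ramified part $L|L_0$. For the unramified part, $L_0.F|F$ is unramified because unramified extensions of a henselian field are in bijection with separable residue extensions and this is stable under base change — so it suffices to treat the case where $L|K$ is totally (tamely) ramified with $[L:K]=(vL:vK)=e$. In this case $vL=\frac{1}{e}vK$ (in the divisible hull, as a subgroup of $vM$), $Lv=Kv$, and $L=K(\eta)$ where $\eta^{e}=u\pi$ for a uniformizer $\pi$ of $K$ and a unit $u$, with $\gcd(e,\chara Kv)=1$.

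Now the heart of the argument. Since $e\mid e'=(vF:vK)$, there exists $\beta\in F$ with $v\beta=\frac{1}{e}v\pi$ (pick an element of $F$ whose value generates the appropriate subgroup of $\frac{1}{e'}vK\supseteq \frac{1}{e}vK$; one needs $vF\supseteq\frac{1}{e}vK$, which follows from $e\mid e'$ together with $vK$ being discrete, hence $vF$ cyclic containing a generator of value $\frac{1}{e'}v\pi$). Consider $\gamma=\eta/\beta\in L.F$; then $v\gamma=0$, so $\gamma v$ lies in $(L.F)v$, and $\gamma^{e}=u\pi/\beta^{e}$ has value $0$ and residue $\bar u\cdot\overline{(\pi/\beta^{e})}\in Fv$. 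Thus $\gamma v$ is a root of $X^{e}-c$ over $Fv$ for some $c\in Fv^{\times}$. Since $e$ is prime to $\chara Kv=\chara Fv$, this polynomial is separable, so $Fv(\gamma v)|Fv$ is separable, and $L.F=F(\gamma)$ with $\gamma$ a unit whose residue generates this extension and whose value is $0$. One then checks — using that $(L.F|F,v)$ is defectless (finite extension of a henselian discretely valued field, so the fundamental inequality is an equality once there is no defect; separability of $L.F|F$ must be noted, which holds as $e$ is prime to the characteristic) — that $[L.F:F]=[Fv(\gamma v):Fv]$, forcing $(vL.F:vF)=1$, i.e. $v(L.F)=vF$, and $(L.F)v=Fv(\gamma v)$ separable over $Fv$. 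That is exactly the assertion that $L.F|F$ is unramified.

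The main obstacle I anticipate is the bookkeeping around defect and the precise claim that $v(L.F)=vF$: a priori $L.F$ could acquire new values, and one must show the radical generator $\eta$ contributes nothing new beyond $\beta\in F$. The clean way is the inequality chain $[L.F:F]\ge (v(L.F):vF)[(L.F)v:Fv]\ge [(L.F)v:Fv]\ge [Fv(\gamma v):Fv] = e \ge [L.F:F]$, where the last inequality holds because $L.F=F(\eta)=F(\gamma)$ is generated by a root of $X^{e}-u\pi/\beta^{e}\cdot\beta^{e}=\eta$... more carefully, $\eta$ satisfies $X^{e}-u\pi$ over $K\subseteq F$, so $[L.F:F]\le e$; combined with defectlessness all inequalities are equalities, pinning down $(v(L.F):vF)=1$ and the residue extension as the stated separable one. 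So the real content is (i) producing $\beta\in F$ of the right value from the divisibility hypothesis on ramification indices, and (ii) invoking defectlessness over the henselian discretely valued base $F$ to convert the fundamental inequality into equalities.
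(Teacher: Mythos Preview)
Your overall strategy---reduce to the totally tamely ramified case, write $L=K(\eta)$ with $\eta^e\in K$, and use the divisibility hypothesis to produce $\beta\in F$ with $v\beta=v\eta$---is a sound classical route and differs from the paper's approach, which deduces the discrete case from the general Theorem~\ref{GAL} about absolute ramification fields together with a lemma on rational rank~1 groups. At the core both arguments rest on the same Hensel's Lemma trick (this is Lemma~\ref{l} in the paper), but you work directly at the level of the single radical generator while the paper builds the more general machinery first.

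However, your final inequality chain contains a genuine error. You assert $[Fv(\gamma v):Fv]=e$, but this is false in general: $\gamma v$ is a root of $X^e-c$ over $Fv$, and nothing prevents this polynomial from being reducible. For instance, if $L\subseteq F$ (which is compatible with $e\mid e'$), then $\gamma\in F$, $\gamma v\in Fv$, and $[Fv(\gamma v):Fv]=1\ne e$. With this equality gone, your chain does not close and you cannot conclude $(v(L.F):vF)=1$.

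The repair is to replace $e$ by $[L.F:F]$ in that step and justify it via Hensel's Lemma. Let $g\in\cO_F[X]$ be the minimal polynomial of $\gamma$ over $F$ (it lies in $\cO_F[X]$ because $\gamma$ is integral), so $\deg g=[L.F:F]$ and $g$ divides $X^e-c'$. Its reduction $\bar g$ divides the separable polynomial $X^e-c$, hence is itself separable; and since $F$ is henselian, any nontrivial coprime factorization of $\bar g$ would lift to a factorization of $g$, contradicting irreducibility. Thus $\bar g$ is irreducible of degree $[L.F:F]$, giving $[Fv(\gamma v):Fv]=[L.F:F]$. Combined with defectlessness of $(L.F|F,v)$ you then get
\[
[L.F:F]\>=\>(v(L.F):vF)\,[(L.F)v:Fv]\>\geq\>[Fv(\gamma v):Fv]\>=\>[L.F:F],
\]
forcing $(v(L.F):vF)=1$ and $(L.F)v=Fv(\gamma v)$ separable over $Fv$, as desired.
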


In \cite{CH} the following version of Abhyankar's Lemma is shown: the ramification index of the compositum of two
finite extensions of local fields is equal to the least common multiple of the ramification indices corresponding
to the finite extensions, provided at least one of the extensions is tame. This version is a special case of a
more general theorem that we will present next.

\pars
The condition on the ramification indices in Theorem~\ref{Abhyankar lemma} is also necessary. Indeed, $(L.F|F,v)$
being unramified implies that $v(L.F) = vF$. Thus,
\[
(vF:vK) \>=\> (v(L.F) : vK) \>=\> (v(L.F): vL) (vL:vK)\>,
\]
hence $(vL:vK)$ divides $(vF:vK)$.

\pars
The question naturally arises how far the above formulation of Abhyankar's Lemma can be generalized. The next
theorem, which implies Theorem~\ref{Abhyankar lemma}, shows that the result remains true whenever $vK$ has
rational rank 1; the rational rank of an abelian group is the $\Q$-dimension of the divisible hull $\Q\otimes_\Z
\Gamma$ of $\Gamma$.

\pars
From now on we will assume the general situation as introduced in the beginning, i.e., $F|K$ is an arbitrary
extension, and $L|K$ is a (not necessarily finite) algebraic extension.
\begin{theorem}                            \label{GALrr1}
Assume that the value group of $(K,v)$ is of rational rank 1, that the extension $(L.K^h|K^h,v)$ is tame and that
the ramification indices $(vL:vK)$ and $(vF:vK)$ are finite. Then $(v(L.F):vK)$ is the least common multiple of
$(vL:vK)$ and $(vF:vK)$. In particular, $(L.F|F,v)$ is unramified if and only if the
ramification index of $(L|K,v)$ divides the ramification index of $(F|K,v)$.
\end{theorem}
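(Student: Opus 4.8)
The plan is to reduce the theorem to the single identity $v(L.F) = vL + vF$, and then to read off the index $(v(L.F):vK)$ from an elementary computation with subgroups of the divisible hull $\Q\otimes_\Z vK$, which by hypothesis has $\Q$-dimension $1$.

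First I would pass to henselizations, which changes nothing on the level of value groups since henselizations are immediate: $vF = vF^h$, $vL = vL^h$, and $v(L.F) = v((L.F)^h)$. Because $L|K$ is algebraic, so is $L.K^h|K^h$, hence $L.K^h$ is henselian (an algebraic extension of a henselian field is henselian); comparing with $L^h$ gives $L.K^h = L^h$, so $v(L.K^h) = vL$. The same argument, carried out inside $M$, yields $(L.F)^h = L.F^h$: the extension $L.F^h|F^h$ is algebraic, so $L.F^h$ is henselian and contains $L.F$, whence $(L.F)^h \subseteq L.F^h$, while $(L.F)^h$ contains $F$, hence $F^h$, and also $L$, hence $L.F^h$. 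Now I would invoke the behaviour of tame extensions under base change recalled in Section~\ref{sectprel}: since $(L.K^h|K^h,v)$ is tame and $F^h$ is a henselian extension of $K^h$, the extension $L.F^h|F^h$ (which equals $(L.K^h).F^h|F^h$) is again tame, with $v(L.F^h) = v(L.K^h) + vF^h = vL + vF$ and $(L.F^h)v = (L.K^h)v\cdot F^h v = Lv\cdot Fv$. Combining, $v(L.F) = v((L.F)^h) = v(L.F^h) = vL + vF$. I expect this to be the heart of the argument: the hypothesis that $L.K^h|K^h$ is tame enters here and nowhere else, and the point that makes the whole theorem work is precisely that this base change preserves the value group, producing the compositum $vL+vF$ rather than something larger.

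It remains to compute $(vL + vF : vK)$ under the rational rank $1$ assumption. Fixing $\gamma \in vK$ with $\gamma > 0$ identifies $\Q\otimes_\Z vK$ with $\Q$, so that $vK \subseteq vL,vF$ become subgroups $A \subseteq B,C$ of $\Q$ with $1\in A$, $(B:A) = e_L := (vL:vK)$ and $(C:A) = e_F := (vF:vK)$. From $e_L B \subseteq A$ one gets $B \subseteq \tfrac1{e_L}A$; since every quotient $\tfrac1n A/A \isom A/nA$ is cyclic of order dividing $n$, equality of indices forces $B = \tfrac1{e_L}A$, and moreover $v_p(e_L) \le h_p(A)$ for every prime $p$, where $h_p$ denotes the $p$-height of $A$. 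Likewise $C = \tfrac1{e_F}A$. Hence $B+C = \tfrac1{e_L}A + \tfrac1{e_F}A = \tfrac1{\ell}A$ with $\ell = \mathrm{lcm}(e_L,e_F)$, and since $v_p(\ell) = \max(v_p(e_L),v_p(e_F)) \le h_p(A)$ for all $p$, we get $(B+C:A) = |A/\ell A| = \ell$. Translating back, $(v(L.F):vK) = \mathrm{lcm}(e_L,e_F)$. Finally, $(L.F|F,v)$ is unramified precisely when $v(L.F) = vF$ and $(L.F)v|Fv$ is separable; the latter holds automatically because $(L.F)v = Lv\cdot Fv$ and $Lv|Kv$ is separable by tameness, while $v(L.F) = vF$ amounts to $\mathrm{lcm}(e_L,e_F) = e_F$, i.e.\ to $e_L \mid e_F$, which is the asserted divisibility of the ramification indices.
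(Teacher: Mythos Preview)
Your approach is essentially the paper's: reduce to the identity $v(L.F)=vL+vF$ and then do elementary group theory in the divisible hull. The paper derives the identity from Theorem~\ref{GAL} (proved in Section~\ref{sectpfGAL}) and then uses Lemma~\ref{ratgr} to obtain $vL=\tfrac1{e_L}vK$, $vF=\tfrac1{e_F}vK$, whence $v(L.F)=\tfrac1{\ell}vK$. Your step~2 is a variant of Lemma~\ref{ratgr}; note that your $p$-height inequality is phrased backwards (if $h_p(A)=\infty$ then $A$ is $p$-divisible and $|A/p^kA|=1$, so the correct constraint coming from $|A/e_LA|=e_L$ is that $h_p(A)<\infty$ for every $p\mid e_L$, not $v_p(e_L)\le h_p(A)$), but the conclusion $(\tfrac1\ell A:A)=\ell$ is right for the reason you intend.

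Two genuine issues. First, the fact you ``invoke from Section~\ref{sectprel}'' is not there; it is precisely Theorem~\ref{GAL}, part~1. Second, the residue-field equality $(L.F)v=Lv\cdot Fv$ that you assert as part of the base-change package is \emph{false} for tame $L|K$ in general. Take $K$ henselian with $vK=\Z$, $\chara Kv\ne 2$, $t\in K$ with $vt=1$, and $a\in\cO_K^\times$ with $av\notin (Kv)^2$; set $L=K(\sqrt t)$ and $F=K(\sqrt{at})$. Then $Lv=Fv=Kv$ but $L.F=K(\sqrt t,\sqrt a)$ has residue field $Kv(\sqrt{av})\ne Kv=Lv\cdot Fv$. (The paper only proves that equality when $L\subseteq K^i$; see Theorem~\ref{GAL}, part~2.) Your use of it to deduce separability of $(L.F)v|Fv$ is therefore unjustified. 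The correct argument is that Theorem~\ref{GAL}, part~1 already gives $L.F\subseteq F^r$, hence $(L.F)v\subseteq (Fv)\sep$, so separability is automatic; equivalently, once $e_L\mid e_F$ you have $vL\subseteq vF$ and the ``if and only if'' clause of Theorem~\ref{GAL}, part~1 gives $L.F\subseteq F^i$, from which unramifiedness follows directly.
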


In contrast, in Section~\ref{sectALri} we will show that the result fails for higher rational rank (see
Lemma~\ref{2ri}). In particular, the result fails for generalized discretely valued fields, i.e., those valued
fields whose value group is a lexicographically ordered product of finitely many copies of $\Z$.

\parm
By reformulating the condition on the ramification indices in a different way, using the value groups themselves
instead, one can prove a far-reaching generalization of Abhyankar's Lemma. The {\bf absolute ramification field
$(K^r,v)$ of} $(K,v)$ is the ramification field of the normal extension $(K\sep|K,v)$, where $K\sep$ denotes the
separable-algebraic closure of $K$. Likewise, the {\bf absolute inertia field $(K^i,v)$ of} $(K,v)$ is the inertia
field of the extension $(K\sep|K,v)$. Since $M$ is assumed to be algebraically closed,  just as for
henselizations, it contains a unique ramification field and a unique inertia field for every subfield $(E,v)$.
We have that $E^h\subseteq E^i\subseteq E^r$ and hence, $(E^i,v)$ and $(E^r,v)$ are henselian.

An extension $(L|K,v)$ of valued fields is called \bfind{immediate} if the canonical embeddings of $vK$ in $vL$
and of $Kv$ in $Lv$ are onto. Recall that the henselization is an immediate extension.

\pars
In Section~\ref{sectpfGAL}, we will prove the following:
\begin{theorem}                             \label{GAL}
1) Assume that $(L,v)$ is contained in the absolute ramification field of $(K,v)$. Then $(L.F,v)$ is contained in
the absolute ramification field of $(F,v)$ and $v(L.F)=vL+vF$. Further, $(L.F,v)$ is contained in the
absolute inertia field of $(F,v)$ (which implies that the extension $(L.F|F,v)$ is unramified)
if and only if $vL$ is a subgroup of $vF$.
\sn
2) Assume that $(L,v)$ is contained in the absolute inertia field of $(K,v)$. Then $(L.F,v)$ is contained in the
absolute inertia field of $(F,v)$ and $(L.F)v=Lv.Fv$. Further, $(L.F,v)$ is contained in the
henselization of $(F,v)$ (which implies that the extension $(L.F|F,v)$ is immediate)
if and only if $Lv$ is a subfield of $Fv$.
\end{theorem}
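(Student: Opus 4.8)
The plan is to reduce both assertions to a henselian base by replacing $F$ with its henselization $F^h$, and then to quote the behaviour of unramified and tame extensions under base change collected in Section~\ref{sectprel}. Two routine remarks inside the ambient algebraically closed field $(M,v)$ prepare the ground. First, from $K\subseteq F$ we get $K^h\subseteq F^h$, and the hypothesis $L\subseteq K^r$ (resp.\ $L\subseteq K^i$) says precisely that $L.K^h|K^h$ is tame (resp.\ unramified), since $K^r=(K^h)^r$ is the maximal tame extension of $K^h$ in $M$ and $K^i=(K^h)^i$ the maximal unramified one. Second, a compositum of an algebraic extension with a henselization is again a henselization, so $L.F^h=L.F.F^h=(L.F)^h$ and $L.K^h=L^h$; as henselizations are immediate extensions, this gives $v(L.F)=v(L.F^h)$, $(L.F)v=(L.F^h)v$, $vL=v(L.K^h)$, and $Lv=(L.K^h)v$. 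Since $vL+vF\subseteq v(L.F)$ and $Lv.Fv\subseteq(L.F)v$ are trivial, in each part only the reverse inclusions carry content.

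For 1), assume $L.K^h|K^h$ is tame. Base change to the henselian field $F^h$ keeps it tame, so $L.F^h|F^h$ is tame; hence $L.F^h\subseteq F^r$ and in particular $L.F\subseteq F^r$. The value-group formula for tame composita (Section~\ref{sectprel}) gives $v(L.F)=v(L.F^h)=v(L.K^h)+vF^h=vL+vF$. For the criterion: if $L.F\subseteq F^i$ then $L.F.F^h|F^h$ is unramified, so $vL\subseteq v(L.F.F^h)=vF^h=vF$; conversely, if $vL\subseteq vF$ then $v(L.F.F^h)=v(L.F)=vL+vF=vF=vF^h$, so the tame extension $L.F.F^h|F^h$ has trivial value-group extension and is therefore unramified, whence $L.F.F^h\subseteq F^i$ and so $L.F\subseteq F^i$. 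Finally, $L.F\subseteq F^i$ makes $L.F|F$ a subextension of the unramified extension $F^i|F$, hence unramified.

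For 2), assume $L.K^h|K^h$ is unramified. Base change to $F^h$ keeps it unramified, so $L.F^h|F^h$ is unramified and $L.F\subseteq L.F^h\subseteq F^i$. The residue-field formula for unramified composita (Section~\ref{sectprel}) gives $(L.F)v=(L.F^h)v=(L.K^h)v.(F^h)v=Lv.Fv$. For the criterion: if $L.F\subseteq F^h$ then $(L.F)v=(F^h)v=Fv$, so $Lv\subseteq Fv$; conversely, if $Lv\subseteq Fv$ then $(L.F.F^h)v=(L.F)v=Lv.Fv=Fv=(F^h)v$, so the unramified extension $L.F.F^h|F^h$ has trivial residue extension, i.e.\ is immediate. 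Since it lies inside $F^i$, and every finite subextension $E|F^h$ of the absolute inertia field satisfies $[E:F^h]=[Ev:(F^h)v]$, such an immediate $E$ must be trivial; hence $L.F.F^h=F^h$, i.e.\ $L.F\subseteq F^h$. Finally, $L.F\subseteq F^h$ makes $L.F|F$ a subextension of the immediate extension $F^h|F$, hence immediate.

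The inputs that are not purely formal are the facts quoted from Section~\ref{sectprel}: that tame, resp.\ unramified, extensions remain tame, resp.\ unramified, under base change to a henselian overfield, and that for such a compositum $N.F'$ over a henselian $F'$ one has $v(N.F')=vN+vF'$, resp.\ $(N.F')v=Nv.F'v$. Granting these, everything else is bookkeeping with henselizations. I expect the subtlest point to be the last step of 2): it uses that finite subextensions of the absolute inertia field have degree exactly the residue degree, which is what forbids a nontrivial immediate algebraic extension inside $F^i$ and makes the condition $Lv\subseteq Fv$ sharp; in 1) the analogous role is played by the fact that a tamely ramified extension with trivial value-group extension is unramified.
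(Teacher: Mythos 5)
Your reduction to the henselian base $F^h$ and the bookkeeping around it are fine (the identities $L.F^h=(L.F)^h$ and $L.K^h=L^h$ follow from part 7) of Theorem~\ref{brt}, and $L.F\subseteq F^r$ resp.\ $L.F\subseteq F^i$ from part 8)), and the two ``if and only if'' criteria are correctly derived \emph{granting} the formulas $v(L.F)=vL+vF$ and $(L.F)v=Lv.Fv$. The problem is that you obtain these two formulas by citing a ``value-group formula for tame composita'' and a ``residue-field formula for unramified composita'' from Section~\ref{sectprel}. No such formulas appear there: Theorem~\ref{brt}, Corollary~\ref{cordl}, Lemma~\ref{l} and Proposition~\ref{p-ext} are the only tools provided, and none of them asserts $v(N.F')=vN+vF'$ or $(N.F')v=Nv.F'v$ for a tame (resp.\ unramified) $N$. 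These identities are precisely the non-formal content of Theorem~\ref{GAL} --- you yourself note that only the inclusions $v(L.F)\subseteq vL+vF$ and $(L.F)v\subseteq Lv.Fv$ carry content --- so as written the argument is circular: the conclusion is being quoted as an input.

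The paper's proof supplies exactly the missing step. For part 1) it first treats the case $vL\subseteq vF$: using Lemma~\ref{l} it writes $L^i=K^i(b_j\mid j\in J)$ with $b_j^{n_j}\in K$ and the $vb_j$ generating $vL$ over $vK$ (the identification of $L^i$ with $K^i(b_j\mid j\in J)$ rests on Corollary~\ref{cordl}, i.e.\ the absence of proper immediate extensions of henselian fields inside $K^r$), and then Hensel's Lemma places each $b_j$ in $F^i$, giving $L.F\subseteq F^i$. The general case is reduced to this one by a Zorn's Lemma construction of an intermediate field $F_1\subseteq F^r$ with $vF_1=vL+vF$, after which $v(L.F)\subseteq vF_1^i=vF_1=vL+vF$. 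Part 2) is proved by the analogous argument one level down, with residue generators lifted via Hensel's Lemma. If you wish to keep your base-change framework, you must still prove the two composita formulas by some such argument; they cannot be quoted from the preliminaries.
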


In Section~\ref{sectALri} we will show that this theorem implies Theorem~\ref{GALrr1} and hence also
Theorem~\ref{Abhyankar lemma}.

\pars
Note that if $\chara Kv=0$, then the absolute ramification field is
algebraically closed, so $(L,v)$ is contained in it as soon as
$L|K$ is algebraic. If $\chara Kv>0$ and $L|K$ is algebraic, then for
$(L,v)$ to lie in the absolute ramification field $(K^r,v)$ of $(K,v)$,
the following three conditions are necessary and sufficient (the letters ``PT'' stand
for ``pre-tame''):
\sn
(PT1) \ $\chara Kv$ does not divide the order of any non-zero element in $vL/vK$,
\n
(PT2) \ the residue field extension $Lv|Kv$ is separable,
\n
(PT3) \ for every finite subextension $E|K$ of $L|K$, the extension\n
$(E^h|K^h,v)$ of their respective henselizations (in $(M,v)\,$) is defectless.
\sn
This means that if $(K,v)$ is henselian, then $(L,v)$ lies in its absolute ramification field if and only if
$(L|K,v)$ is a tame extension; in other words, $(K^r,v)$ is the unique maximal tame extension of $(K,v)$.

Similarly, $(L,v)$ lies in the absolute inertia field of $(K,v)$ if and
only if $L|K$ is algebraic, $vL=vK$, and conditions (PT2) and (PT3) hold.

\pars
Assume now that $\chara Kv=p>0$. Does elimination of tame ramification also hold if the extension $(L^h|K^h,v)$
is not tame? The answer is yes
if we restrict the scope to normal extensions. We denote by $(vL)_{p'}$ the maximal subgroup of $vL$ containing
$vK$ and such that $p$ does not divide the order of any of its nonzero element modulo $vK$. Further,
we denote by $(Lv)_s$ the maximal subfield of $Lv$ separable over $Kv$. A $p$-extension is a (not necessarily
finite) Galois extension with Galois group a $p$-group.
\begin{theorem}                             \label{MT2}
Assume that $L|K$ is normal, $F|K$ is an arbitrary extension, and $\chara Kv=p>0$.
Then the following assertions hold.
\sn
1) The quotient group $v(L.F)/((vL)_{p'}+vF)$ is a $p$-group.
In particular, $v(L.F)/vF$ is a $p$-group if and only if $(vL)_{p'}\subseteq vF$.
\n
2) If $(vL)_{p'}=vK$, then the maximal separable subextension of $(L.F)v\,|$ $(Lv)_s.Fv$ is a $p$-exten\-sion.

\end{theorem}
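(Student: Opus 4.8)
The plan is to reduce everything to the normal extension $(L|K,v)$ by passing to henselizations and exploiting the decomposition of $L^h|K^h$ through the ramification and inertia fields. First I would replace $K$ by $K^h$ and $L$ by $L^h$ (and correspondingly $F$ by $F^h$, $L.F$ by $(L.F)^h$); this is harmless because henselization is immediate, so it changes neither value groups nor residue fields, neither the groups $(vL)_{p'}$, $vF$ nor the fields $(Lv)_s$, $Fv$. Since $L|K$ is normal, $L^h|K^h$ is normal as well, and I can interpose the absolute ramification field: set $L_0 := L^h \cap K^r$ and note that, because $\chara Kv = p$, the extension $L^h|L_0$ is a $p$-extension (its Galois group is a pro-$p$ group, being a subquotient of the ramification group, which is pro-$p$). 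By the characterization recalled in the excerpt via (PT1)--(PT3), $L_0$ is a tame extension of $K^h$, so $(L_0|K^h,v)$ lies in $(K^h)^r$; moreover $vL_0 = (vL)_{p'}$ and $L_0 v = (Lv)_s$ — the first because the tame part captures exactly the prime-to-$p$ part of the value-group extension, the second because the inertia field already realizes the separable residue extension.

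For part 1), I apply Theorem \ref{GAL}(1) to the tame extension $L_0|K^h$: it gives $v(L_0.F^h) = vL_0 + vF = (vL)_{p'} + vF$. It remains to compare $v((L.F)^h)$ with $v(L_0.F^h)$. Here I would argue that $(L.F)^h = L^h . F^h$ is obtained from $L_0 . F^h$ by adjoining $L^h$, i.e.\ $(L.F)^h = L^h . (L_0.F^h)$, and that $L^h | L_0$ being a $p$-extension forces $L^h.(L_0.F^h) \,\big|\, L_0.F^h$ to be a $p$-extension too (a compositum of a normal $p$-extension with another field is again a $p$-extension over that field, since its Galois group embeds into $\mathrm{Gal}(L^h|L_0)$). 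A Galois $p$-extension of henselian valued fields of residue characteristic $p$ can only enlarge the value group by a $p$-group — any prime-to-$p$ piece of the value extension would, by Hensel's lemma plus the standard ramification-theoretic decomposition, correspond to a nontrivial tame subextension, impossible inside a $p$-extension. Hence $v((L.F)^h) / v(L_0.F^h)$ is a $p$-group, and combining with $v(L_0.F^h) = (vL)_{p'}+vF$ yields that $v(L.F)/((vL)_{p'}+vF)$ is a $p$-group. The "in particular" clause is then immediate: if $(vL)_{p'} \subseteq vF$ the quotient is already $v(L.F)/vF$, a $p$-group; conversely, if $v(L.F)/vF$ is a $p$-group then so is its subgroup $((vL)_{p'}+vF)/vF \cong (vL)_{p'}/((vL)_{p'}\cap vF)$, but by (PT1) this group has no nonzero $p$-torsion, so it is trivial.

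For part 2), assume $(vL)_{p'} = vK$; equivalently $vL_0 = vK$, so by the characterization recalled in the excerpt $L_0$ lies in the \emph{absolute inertia field} $(K^h)^i$. Now I apply Theorem \ref{GAL}(2) to $L_0|K^h$: it gives $(L_0.F^h)v = L_0v.F^hv = (Lv)_s.Fv$. As in part 1), $(L.F)^h = L^h.(L_0.F^h)$ with $L^h|L_0$ a $p$-extension, so $(L.F)^h \,|\, L_0.F^h$ is a $p$-extension. It then suffices to show that the residue field extension induced by a Galois $p$-extension of henselian fields of residue characteristic $p$ has \emph{$p$-power separable part}: decompose the $p$-extension through its inertia field; the part below the inertia field is immediate (trivial residue extension), and the inertia field realizes a residue extension whose Galois group is a subquotient of the original $p$-group, hence a $p$-extension, while any inseparable residue growth contributes nothing to the separable part. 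Therefore the maximal separable subextension of $(L.F)^h v \,\big|\, (L_0.F^h)v = (Lv)_s.Fv$ is a $p$-extension, which is the claim.

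The main obstacle I anticipate is the control step "a Galois $p$-extension of a henselian field of residue characteristic $p$ cannot enlarge the value group outside of a $p$-group, and enlarges the separable residue field only by a $p$-extension." This is exactly the kind of fact that should follow cleanly from the ramification-theoretic machinery set up in Section \ref{sectprel} (the inertia/ramification field decomposition, with the observation that for residue characteristic $p$ the relevant "tame quotient" groups $vL^i/vL^h$ and $\mathrm{Gal}(L^r|L^i)$ are prime-to-$p$ while the wild part is pro-$p$), but making the interplay with the \emph{compositum} $L^h.(L_0.F^h)$ precise — in particular verifying that $\mathrm{Gal}\big(L^h.(L_0.F^h)\,\big|\,L_0.F^h\big)$ really does embed into $\mathrm{Gal}(L^h|L_0)$ when $L^h|L_0$ is only infinite normal, not finite — will require a little care with the Krull topology and with the fact that normality is preserved under the relevant base changes.
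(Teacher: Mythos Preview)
Your approach is essentially the paper's: isolate the tame part of $L$ (you use $L_0 = L^h \cap K^r$; the paper uses the ramification field $L_r$ of the normal extension $(L|K,v)$ directly, noting $vL_r = (vL)_{p'}$ and $L_r v=(Lv)_s$), apply Theorem~\ref{GAL} to compute $v(L_r.F)$ and $(L_r.F)v$, and then argue that the remaining extension $L.F\,|\,L_r.F$ contributes only $p$-power pieces to the value group and to the separable part of the residue field extension. The passage to henselizations is not needed, since Theorem~\ref{GAL} already allows $L$ to sit merely inside $K^r$ (resp.\ $K^i$) without assuming $(K,v)$ henselian.

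There is one genuine gap. You assert that $L^h|L_0$ is a $p$-extension, i.e.\ Galois with pro-$p$ Galois group, and then base your compositum argument on the embedding $\mathrm{Gal}(L^h.(L_0.F^h)\,|\,L_0.F^h)\hookrightarrow \mathrm{Gal}(L^h|L_0)$. But the hypothesis is only that $L|K$ is \emph{normal}; if $L|K$ has a nontrivial purely inseparable part, then $L^h|L_0$ is not separable, hence not Galois, and the embedding statement as written is meaningless. The paper handles this by interposing $L_s$, the maximal separable subextension of $L|K$: then $L_s|L_r$ is a genuine $p$-extension, so $L_s.F|L_r.F$ is too (this is Proposition~\ref{p-ext}, which is exactly your embedding argument but stated for normal extensions with the inseparable part peeled off), while $L|L_s$ and hence $L.F|L_s.F$ is purely inseparable. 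Both pieces then contribute only $p$-powers to $(v(L.F):v(L_r.F))$ and force the maximal separable subextension of $(L.F)v\,|\,(L_r.F)v$ to be a $p$-extension. Your anticipated obstacle---infinite versus finite for the Galois embedding---is not the real issue (restriction of automorphisms is injective regardless); the separable/inseparable split is what you are missing.
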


\parm
Trivial examples of ramification that can easily be eliminated appear when the base field $K$ is smaller than the
constant field of the function field $F$. More sophisticated examples will therefore present situations where the
base field $K$ is equal to the constant field, i.e., is relatively algebraically closed in $F$. But this does
not imply that $K$ is equal to the relative algebraic closure of $K$ in a fixed henselization of $(F,v)$. In
\cite{Kurf}, for valued rational function fields $(K(x)|K,v)$ the \bfind{implicit constant field} $\ic(K(x)|K,v)$
is defined to be the relative algebraic closure of $K$ in a fixed henselization of $(K(x),v)$. While it depends
on the chosen henselization, it is unique up to valuation preserving isomorphism over $K$. The following is
Theorem~1.3 of \cite{Kurf}:
\begin{theorem}                             \label{MTIC}
Let $(L|K,v)$ be a countably generated separable-algebraic extension of
non-trivially valued fields. Then there is an extension of $v$ from $L$ to the algebraic closure $L(x)\ac=
K(x)\ac$ of the rational function field $K(x)$ such that, upon taking henselizations in $(K(x)\ac,v)$,
\[
L^h\>=\>  \ic(K(x)|K,v)\>.
\]
\end{theorem}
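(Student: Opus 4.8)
The plan is to build the desired extension of $v$ from $L$ to $K(x)\ac$ by a transfinite construction that forces every element of $L$ to become ``implicit'' over $K(x)$, i.e.\ to land in the henselization $K(x)^h$, while simultaneously keeping $K(x)^h$ from picking up anything beyond $L$. First I would write $L = \bigcup_{n<\omega} L_n$ with $L_0 = K$ and each $L_n|L_{n-1}$ a simple separable-algebraic extension, say $L_n = L_{n-1}(\alpha_n)$; countable generation is exactly what makes this exhaustion possible. Working inside a fixed algebraic closure, the idea is to choose, step by step, a sequence of elements $x_n \in K(x)\ac$ together with an extension of $v$, so that $\alpha_n$ is forced into the henselization of $K(x_n)$ over $K$; the natural device is to make $x_n$ a root of a polynomial that is an \emph{immediate approximation} to $\alpha_n$ in the sense of Krasner's Lemma, i.e.\ pick $x_n$ transcendental-looking but valuation-theoretically so close to $\alpha_n$ that $\alpha_n \in K(x_n)^h$. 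Because $L|K$ is separable-algebraic, Krasner's Lemma is available and gives the inclusion $L_n^h \subseteq K(x_n)^h$ at each finite stage.

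The second half of the argument is the reverse inclusion $\ic(K(x)|K,v) \subseteq L^h$, and here the key point is to keep the extension $v$ on $K(x)\ac$ under control: at each stage the transcendence degree of $K(x_n)|K$ should stay $1$, and one needs that the henselization $K(x_n)^h$ adds no \emph{new} algebraic elements over $K$ beyond those already in $L_n^h$. This is where I would use the structure theory of valued rational function fields from \cite{Kurf}: choosing $x_n$ generically enough (e.g.\ as a ``pseudo-limit'' realizing a prescribed cut in the value group, or a residue transcendental extension, depending on the type of $\alpha_n$) guarantees that passing from $K(x_{n-1})$ to $K(x_n)$ enlarges the implicit constant field by exactly $L_{n-1}^h \to L_n^h$ and nothing more. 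Taking $x = \lim x_n$ — or rather arranging the $x_n$ to be successive approximations converging in an appropriate completion so that a single $x$ works — and passing to the union over $n$ then yields $\ic(K(x)|K,v) = \bigcup_n L_n^h = L^h$.

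The main obstacle I anticipate is the \emph{coherence of the transfinite choices}: at stage $n$ one must pick $x_n$ close enough to $\alpha_n$ to drag $\alpha_n$ into the henselization (a Krasner-type lower bound on how good the approximation must be), yet not so close — and not so special — that $K(x_n)^h$ accidentally captures some $\beta \in L_{n+1}\setminus L_n$ or, worse, some element algebraic over $K$ that is not in $L$ at all. Balancing these two requirements simultaneously for all $n$, and then showing the choices can be organized so that a \emph{single} transcendental $x$ (rather than a sequence of different ones) does the job, is the delicate part; it amounts to controlling the implicit constant field $\ic(K(x)|K,v)$ as a functor of the chosen valuation, which is precisely the technical heart of \cite{Kurf}. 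The non-triviality of the valuation is used to guarantee that there is ``room'' in $vK$ (or in $Kv$) to place the approximating elements $x_n$, so that the construction does not degenerate.
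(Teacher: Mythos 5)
The paper does not prove Theorem~\ref{MTIC}: it is quoted verbatim as Theorem~1.3 of \cite{Kurf} and used as an imported fact, so there is no argument in this paper to compare your sketch against. What follows is an assessment of your proposal on its own terms.

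Your underlying intuition is the right one: one exhausts $L$ by a countable chain $K=L_0\subseteq L_1\subseteq\cdots$, arranges the valuation data so that $x$ is ``valuation-theoretically close'' to generators of the $L_n$, and uses a Krasner/Hensel argument to drag those generators into $K(x)^h$; and you correctly flag that the hard part is preventing $K(x)^h$ from acquiring anything beyond $L^h$. But there is a mechanical confusion that derails the sketch as written. In the theorem the indeterminate $x$ is \emph{fixed}; the object to be constructed is the extension of $v$ from $L$ to $L(x)=K(x)$ (and onward to $K(x)\ac$), not the element. You propose instead to choose a sequence of elements $x_n\in K(x)\ac$ and then ``take $x=\lim x_n$''; but a nontrivial limit of a pseudo Cauchy sequence of elements of $K(x)\ac$ generally lies in a completion, not in $K(x)\ac$, and in any case one cannot retroactively make it equal to the given $x$. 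The correct implementation of your idea is to keep $x$ and instead declare $x$ to be a pseudo-limit of a carefully chosen pseudo Cauchy sequence $(a_\nu)$ \emph{in $L$}; the freedom lies entirely in the choice of that sequence and of the ultrametric distances $v(x-a_\nu)$, which one then drives past the Krasner constants of the successive generators $\alpha_n$ to force $\alpha_n\in K(x)^h$. With that correction the forward inclusion $L^h\subseteq\ic(K(x)|K,v)$ becomes a genuine argument. The reverse inclusion, however — showing that this valuation does not make $\ic(K(x)|K,v)$ strictly larger than $L^h$ — is the substance of the theorem, and your sketch simply defers it to ``the technical heart of \cite{Kurf}'' without supplying an argument. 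As it stands the proposal is a reasonable plan that identifies the right obstacles, but it is not a proof.
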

\n
This means that $L\subset K(x)^h$, so that $L(x)=L.K(x)$ lies in the henselization of $K(x)$ and all ramification,
whether tame or wild, is eliminated. We will construct specific examples in Section~\ref{sectex}.

\parm
Finally, let us mention that there are various other versions and generalizations of Abhyankar's Lemma. Here we
list only a few. When the valued field $(K,v)$ is a formally $\wp$-adic field, then
Theorem~\ref{Abhyankar lemma} is Corollary~4 in \cite[Chapter 5]{N}. Elimination of ramification by so-called
strongly solvable extensions of the base field has been presented in \cite{P1,P2}. Generalizations are also
discussed in the Stacks Project \cite{SP}, some of which we will cite in Section~\ref{sectALri}. Finally, a
``perfectoid Abhyankar lemma'' has recently been presented in \cite{An}.

\bn
%
%
\section{Preliminaries}                                   \label{sectprel}
We recall some aspects of ramification theory and of general valuation theory [cf.\ e.g.\
\cite{Abh,En,EP,Kubook,Neu,ZS2}. We take a normal algebraic
extension $(L|K,v)$ of valued fields and set $G= \Aut L|K$. The \textbf{decomposition group} of the extension is
defined as
\[
G^d(L|K,v) \>:=\> \{ \sigma \in G \mid v \circ \sigma = v \text{ on } L \}\>,
\]
the \textbf{inertia group} as
\[
G^i(L|K,v) \>:=\> \{ \sigma \in G \mid \forall \, x \in \cO_L: v(\sigma x - x ) > 0\}\>,
\]
and the \textbf{ramification group} as
\[
G^r(L|K,v) \>:=\> \{ \sigma \in G \mid \forall \, x \in L^\times: v(\sigma x - x ) > vx \}\>.
\]
The corresponding fixed fields in $K\sep$ will be denoted as $(L|K,v)^d$, $(L|K,v)^i$ and $(L|K,v)^r$ and are
called the \bfind{decomposition field}, \bfind{inertia field} and \bfind{ramification field} of $(L|K,v)$,
respectively. We have:
\[
G^r(L|K,v) \trianglelefteq G^i(L|K,v) \trianglelefteq G^d(L|K,v) \leq G
\]
and
\[
G^r(L|K,v)\trianglelefteq G^d(L|K,v)\>,
\]
so $(L|K,v)^d\subseteq (L|K,v)^i\subseteq (L|K,v)^r$ with both extensions as well as $(L|K,v)^d\subseteq
(L|K,v)^r$ Galois.

In the above notation, the absolute decomposition field, absolute inertia field and absolute ramification field
of $(K,v)$ that we mentioned in the introduction are $K^d=(K\ac|K,v)^d=(K\sep|K,v)^d$, $K^i=(K\ac|K,v)^i=
(K\sep|K,v)^i$ and $K^r=(K\ac|K,v)^r=(K\sep|K,v)^r$, respectively.

\pars
We collect the main facts of ramification theory that we will need in this paper in the next theorem. To simplify
notation, we set $L_d=(L|K,v)^d$, $L_i=(L|K,v)^i$, $L_r=(L|K,v)^r$, and denote by $L_s$ the maximal separable
extension of $K$ inside of $L$.
\begin{theorem}                                    \label{brt}
1) The extension $(L_d|K,v)$ is immediate and $v$ has a unique extension from $L_d$ to $L$.
\sn
2) The extension $L_i v|L_d v$ is separable, and $L_r v=L_i v\,$.
\sn
3) We have that $vL_i=vL_d$, and the order of no element in $vL_r/vL_i=vL_r/vK$ is divisible by $\chara Kv$.
\sn
4) If $\chara Kv=p>0$, then $G^r(L|K,v)$ is a $p$-group, so $L_s|L_r$ is a $p$-extension. If $\chara Kv=0$, then
$G^r(L|K,v)$ is trivial and $L_r=L$. The extension $Lv|L_r v$ is purely inseparable, and $vL/vL_r$ is a $p$-group.
\sn
5) If $K\subseteq K_1\subseteq K_2\subseteq L_r$, $K_2|K_1$ is finite and $(K_1,v)$ (and thus also $(K_2,v)\,$) is
henselian, then the extension $(K_2|K_1,v)$ is defectless.
\sn
6) We have that $(L|L_d,v)^i=L_i$ and $(L|L_d,v)^r=(L|L_i,v)^r=L_r\,$.
\sn
7) If $K\subseteq L'\subseteq L$, then $(L|L',v)^d=L'.L_d\,$, $(L|L',v)^i=L'.L_i$ and $(L|L',v)^r=L'.L_r\,$.
\sn
8) Whenever $F|K$ is an arbitrary extension and the valuation $v$ is fixed on some field containing the
algebraic closure of $F$, then $K^d\subseteq F^d$, $K^i\subseteq F^i$ and $K^r\subseteq F^r$.
\sn
9) If $K\subseteq K_1\subseteq K^d$, then $K_1^d=K^d$. If $K\subseteq K_1\subseteq K^i$, then $K_1^i=K^i$.
If $K\subseteq K_1\subseteq K^r$, then $K_1^r=K^r$.
\end{theorem}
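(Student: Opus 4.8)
We regard Theorem~\ref{brt} as a compilation of standard facts of ramification theory, so the plan is to prove it mostly by pointing to the classical literature (\cite{En,EP,Neu,ZS2}; see also \cite{Kubook}) and supplying a few short reductions. Parts 1)--4) are the classical ``four-step filtration'' statements. For 1), $G^d(L|K,v)$ is the stabilizer of the chosen extension $v$ of $v|_K$ to $L$; since $L|L_d$ is again normal with $G^d(L|L_d,v)=G^d(L|K,v)=\Aut L|L_d$, the valuation has a unique extension from $L_d$ to $L$, and the immediateness of $(L_d|K,v)$ is the classical statement that passing to the decomposition field changes neither the value group nor the residue field. For 2), $G^r(L|K,v)$ acts trivially on residues, so $L_rv=L_iv$, while $G^d/G^i$ realizes precisely the separable part of $Lv|Kv$, so $L_iv|L_dv$ is separable. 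For 3), $G^i$ acts trivially on the value group, whence $vL_i=vL_d$, and the embedding of $G^i/G^r$ into a group of homomorphisms into $(Kv)^\times$ forces the exponent of $vL_r/vK$ to be prime to $\chara Kv$. Part 4) records that $G^r(L|K,v)$ is the wild inertia subgroup, pro-$p$ when $\chara Kv=p>0$ and trivial when $\chara Kv=0$, together with the standard descriptions of $Lv|L_rv$ and $vL/vL_r$; each of these I would simply cite.

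Parts 6) and 7) I would deduce from the Galois-theoretic identity
\[
G^\bullet(L|L',v)\>=\>G^\bullet(L|K,v)\cap\Aut L|L'\qquad(\bullet\in\{d,i,r\})\>,
\]
valid for $K\subseteq L'\subseteq L$, which is immediate from the definitions together with the normality chain $G^r\trianglelefteq G^i\trianglelefteq G^d$. Taking fixed fields and using that the fixed field of an intersection of subgroups equals the compositum of their fixed fields yields 7); specializing $L'=L_d$, resp.\ $L'=L_i$, and recalling $G^i,G^r\subseteq G^d$ yields 6). Here the point that needs care is that $L_d$, $L_i$, $L_r$ are taken inside $K\sep$: one first replaces $L$ by its maximal separable subextension, applies the Galois correspondence there, and then checks that adjoining the purely inseparable part does not affect the compositum formulas.

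For 5), the plan is to reduce to the defectlessness of finite extensions inside an absolute ramification field. Since $K_1$ is henselian with $K\subseteq K_1\subseteq K_2\subseteq L_r$, I would first check that $K_2$ lies in the absolute ramification field $K_1^r$ of $K_1$ (stability of the ramification field under the base change to the intermediate henselian field $K_1$), and then the characterization of $K_1^r$ recalled in the introduction via (PT1)--(PT3) makes $(K_2|K_1,v)$ a tame extension, hence defectless by (TE3). Part 8) is the base-change monotonicity statement: with $v$ fixed on a field containing $\ovl F$, each $\sigma\in G^\bullet(F\sep|F,v)$ restricts to an element of $G^\bullet(K\sep|K,v)$, so $F^\bullet\supseteq K^\bullet$. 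Part 9) records that these absolute fields are already in final position: if $K\subseteq K_1\subseteq K^d$ then $K^d$ is henselian and $K^d|K_1$ is immediate, forcing $K_1^d=K^d$, and likewise $K^i$, resp.\ $K^r$, is the maximal unramified, resp.\ tame, extension of any such $K_1$, giving $K_1^i=K^i$, resp.\ $K_1^r=K^r$; these I would again cite from \cite{En,EP}. I expect the only genuinely nonroutine steps to be the reduction in 5) to the defectlessness of tame extensions and the separability bookkeeping in 6)--7); the rest is organization of known material.
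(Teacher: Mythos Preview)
Your proposal is correct and matches the paper's approach: the paper states Theorem~\ref{brt} as a compilation of known ramification-theoretic facts with references to \cite{Abh,En,EP,Kubook,Neu,ZS2} and gives no proof at all. Your plan to cite the classical literature and supply short reductions is therefore entirely in line with (indeed, more detailed than) what the paper does.
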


\begin{corollary}                            \label{cordl}
If $K\subseteq K_1\subseteq K'_1\subseteq L_r$, $(K'_1|K_1,v)$ is immediate and $(K_1,v)$ (and thus also
$(K'_1,v)\,$) is henselian, then $K_1=K'_1\,$.
\end{corollary}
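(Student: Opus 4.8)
The plan is to argue by contradiction, reducing the immediate extension $(K_1'|K_1,v)$ to a finite subextension and then invoking the defectlessness statement inside the ramification field, namely Theorem~\ref{brt}(5), to force the degree of that subextension to be $1$.

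Concretely, suppose $K_1\neq K_1'$ and pick some $a\in K_1'\setminus K_1$. Setting $K_2:=K_1(a)$ gives a finite extension with $[K_2:K_1]>1$ and $K\subseteq K_1\subseteq K_2\subseteq L_r$. Since $(K_1,v)$ is henselian, $v$ extends uniquely to the algebraic extension $K_2$ and $(K_2,v)$ is again henselian, so Theorem~\ref{brt}(5) applies and shows that $(K_2|K_1,v)$ is defectless, i.e.
\[
[K_2:K_1]\>=\>(vK_2:vK_1)\,[K_2v:K_1v]\>.
\]
On the other hand, the inclusions $vK_1\subseteq vK_2\subseteq vK_1'$ and $K_1v\subseteq K_2v\subseteq K_1'v$, together with the hypothesis that $(K_1'|K_1,v)$ is immediate (so $vK_1'=vK_1$ and $K_1'v=K_1v$), force $vK_2=vK_1$ and $K_2v=K_1v$; hence $(vK_2:vK_1)=[K_2v:K_1v]=1$. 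Substituting into the displayed equality yields $[K_2:K_1]=1$, contradicting $[K_2:K_1]>1$. Therefore $K_1=K_1'$.

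There is essentially no hard step here: the corollary is just the combination of the fundamental equality for finite subextensions of $L_r$ over henselian intermediate fields (Theorem~\ref{brt}(5)) with the definition of an immediate extension. The only points requiring a word of care are that $(K_2,v)$ is indeed henselian, which holds because algebraic extensions of henselian valued fields are henselian with the unique prolongation of $v$, and that $K_2$ still lies in $L_r$, which is immediate from $K_2\subseteq K_1'\subseteq L_r$. One could also phrase the argument without passing to a single element, observing that $(K_1'|K_1,v)$ is a directed union of finite subextensions each of which lies between henselian fields inside $L_r$ and is therefore defectless; but isolating one $a\in K_1'\setminus K_1$ makes the contradiction most transparent.
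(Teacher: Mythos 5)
Your proof is correct and is essentially the same as the paper's: both reduce to a finite subextension $K_2|K_1$, apply Theorem~\ref{brt}(5) to get defectlessness, and combine this with immediateness to force $[K_2:K_1]=1$. The only cosmetic difference is that you phrase it as a contradiction via a single element $a\in K_1'\setminus K_1$, whereas the paper argues directly with an arbitrary finite subextension.
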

\begin{proof}
Take $K_2|K_1$ to be any finite subextension of $K'_1|K_1\,$. Since $(K'_1|K_1,v)$ is immediate by assumption,
the same holds for $(K_2|K_1,v)$. As this extension is also defectless by part 5) of Theorem~\ref{brt}, we have
that $[K_2:K_1]=(vK_2:vK_1)[K_2v:K_1v]=1$, whence $K_1=K_2\,$. It follows that $K_1=K'_1\,$.
\end{proof}

Here is a crucial lemma for the proof of Theorems~\ref{GAL} and~\ref{MT2}:
\begin{lemma}                               \label{l}
Take any extension $(L,v)$ of $(K,v)$, elements $\beta\in vL$, $c\in K$
and a positive integer $n$ such that $n\beta=vc$. Suppose that $p$ does
not divide $n$. Then the polynomial $X^n-c$ splits in the absolute
inertia field $L^i$ of $(L,v)$ and $\beta\in vL^i$.
\end{lemma}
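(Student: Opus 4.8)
The plan is to normalize $X^n-c$ to a polynomial with unit constant term, to pass to the residue field where that polynomial becomes separable because $p\nmid n$, and then to lift a splitting back by Hensel's Lemma inside the henselian field $(L^i,v)$.

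First I would observe that $c\neq 0$, since $vc=n\beta$ lies in $vM$, and pick $b\in L^\times$ with $vb=\beta$. Then $u:=c/b^n\in L$ satisfies $vu=vc-n\beta=0$, so $u\in\cO_L^\times$. Since $X^n-c=b^n\bigl((X/b)^n-u\bigr)$ and $b\in L\subseteq L^i$, it suffices to prove that $g(X):=X^n-u$ splits in $L^i$: a factorization $g(X)=\prod_{j=1}^n(X-a_j)$ over $L^i$ yields $X^n-c=\prod_{j=1}^n(X-ba_j)$ with all $ba_j\in L^i$. Moreover each such $a_j$ is a unit (as $a_j^n=u$), so $v(ba_j)=\beta$, whence $\beta\in vL^i$; alternatively, $n\,v(ba_j)=vc=n\beta$ forces $v(ba_j)=\beta$ because $vM$ is torsion free.

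Next I would reduce modulo the maximal ideal of $\cO_{L^i}$. Since $n$ is not divisible by $\chara Kv$ (a condition which is automatic when $\chara Kv=0$), $n$ is invertible in $Kv$ and hence in $Lv$, so $\gcd\bigl(X^n-\bar u,\,nX^{n-1}\bigr)=1$ in $Lv[X]$, using that $\bar u:=uv\neq 0$; thus $X^n-\bar u$ is separable over $Lv$ and its roots lie in the separable-algebraic closure $(Lv)\sep$. Recall from ramification theory that $(L^i,v)$ is henselian and that its residue field is $L^iv=(Lv)\sep$. Consequently the reduction $\bar g=X^n-\bar u$ of the monic polynomial $g\in\cO_{L^i}[X]$ splits into $n$ distinct linear factors over $L^iv$; lifting each of these simple roots through Hensel's Lemma produces $n$ roots of $g$ in $\cO_{L^i}$ with pairwise distinct residues, hence $n$ distinct roots of $g$. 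Therefore $g$ splits in $L^i$, and by the reduction above so does $X^n-c$, with $\beta\in vL^i$.

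The only point requiring care is the ramification-theoretic input, namely that $(L^i,v)$ is henselian (noted in Section~\ref{sectprel}) and that $L^iv$ is the full separable-algebraic closure of $Lv$; this last fact is the standard description of the residue field of the maximal unramified extension, and it is exactly what ensures that the separable polynomial $X^n-\bar u$ already splits over $L^iv$, so that Hensel's Lemma applies. The remaining steps are routine: a unit normalization and a squarefreeness check that uses the hypothesis $p\nmid n$ in an essential way.
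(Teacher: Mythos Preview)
Your argument is correct and follows essentially the same route as the paper: normalize by an element $b\in L$ with $vb=\beta$ to reduce to $X^n-u$ with $u$ a unit, observe that the reduction $X^n-\bar u$ is separable since $p\nmid n$ and therefore splits in $(Lv)\sep=L^iv$, and then lift the roots to $L^i$ via Hensel's Lemma. Your write-up is just a more detailed version of the paper's proof, with the extra (harmless) remarks about $\beta\in vL^i$ and the case $\chara Kv=0$.
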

\begin{proof}
Take some $b\in L$ such that $vb=\beta$. Then $vcb^{-n}=0$ and
therefore, $cb^{-n}v\ne 0$. Since $p$ does not divide $n$, the
polynomial $X^n-cb^{-n}v$ has $n$ distinct roots in $(Lv)\sep=
L^iv$. By Hensel's Lemma, it follows that the polynomial $X^n-cb^{-n}$
splits completely in the henselian field $(L^i,v)$. Hence, so does $X^n-c$.
\end{proof}

\pars
Further, we will need the \bfind{fundamental inequality}, of which we state only a simple form here:
for every finite extension $(L|K,v)$,
\begin{equation}                     \label{fi}
[L:K]\>\geq\> (vL:vK)[Lv:Kv]\>.
\end{equation}

\pars
Finally, we will need:
\begin{proposition}                         \label{p-ext}
Take any prime $p$ and an arbitrary extension $F|K$ and a normal algebraic extension $L|K$. If the maximal
separable subextension of $L|K$ is a $p$-extension, then the same holds for $L.F|F$.
\end{proposition}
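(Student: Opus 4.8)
The plan is first to identify the maximal separable subextension of $L.F\,|\,F$, and then to run a short infinite Galois theory argument. Write $L_s$ for the maximal separable subextension of $L|K$. Since $L|K$ is normal, $L_s|K$ is normal and separable, hence Galois, and by hypothesis $\mathrm{Gal}(L_s|K)$ is a $p$-group. I claim that $L_s.F$ is the maximal separable subextension $(L.F)_s$ of $L.F\,|\,F$. Indeed, $L_s.F\,|\,F$ is separable, because separability of algebraic extensions is preserved under composition with an arbitrary extension; and $L.F\,|\,L_s.F$ is purely inseparable, because $L|L_s$ is purely inseparable and this property too is preserved under composition with $F$ (every element of $L$ has a $p$-power lying in $L_s\subseteq L_s.F$). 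Since $L_s.F\,|\,F$ is separable while $L.F\,|\,L_s.F$ is purely inseparable, $L_s.F$ is the separable closure of $F$ in $L.F$, that is, $(L.F)_s=L_s.F$. Thus it suffices to show that $L_s.F\,|\,F$ is a $p$-extension.

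Next I would apply the fundamental theorem of infinite Galois theory to the Galois extension $L_s|K$ and the arbitrary extension $F|K$, both lying inside $M$. It yields that $L_s.F\,|\,F$ is Galois and that restriction to $L_s$ induces a topological isomorphism
\[
\mathrm{Gal}(L_s.F\,|\,F)\>\cong\>\mathrm{Gal}(L_s\,|\,L_s\cap F)\>.
\]
Since $K\subseteq L_s\cap F\subseteq L_s$ and $L_s|K$ is Galois, $\mathrm{Gal}(L_s\,|\,L_s\cap F)$ is a closed subgroup of $\mathrm{Gal}(L_s|K)$. A closed subgroup of a $p$-group is again a $p$-group, so $\mathrm{Gal}(L_s.F\,|\,F)$ is a $p$-group; equivalently, $L_s.F\,|\,F$ is a $p$-extension. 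By the first paragraph this is the maximal separable subextension of $L.F\,|\,F$, which is what we had to prove.

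I do not anticipate a genuine obstacle, as the argument is merely a combination of three standard facts. The only points requiring a little care are the verification that $L_s.F$ really is $(L.F)_s$ — which rests on the permanence of separability and of pure inseparability under base change — and the legitimacy of invoking the infinite Galois correspondence even though $F|K$, and hence $L_s\cap F$, need be neither finite nor algebraic over $K$; the correspondence nevertheless applies and the subgroup $\mathrm{Gal}(L_s\,|\,L_s\cap F)$ is closed, which is all that is needed.
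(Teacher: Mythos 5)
Your proof is correct and follows essentially the same route as the paper's: both identify $L_s.F$ as the maximal separable subextension of $L.F|F$ via the pure inseparability of $L.F|L_s.F$, and both reduce to the fact that $\Aut(L_s.F|F)\cong\Aut(L_s|L_s\cap F)$ is a subgroup of the $p$-group $\Aut(L_s|K)$ (the paper phrases the isomorphism via linear disjointness of $F$ and $L_s$ over $L_s\cap F$, you via the infinite Galois correspondence). No gaps.
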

\begin{proof}
Let $L_s|K$ be the maximal separable subextension of $L|K$ and set $E:=L_s\cap F$. Then both $L_s|K$ and $L_s|E$
are normal and separable, and $\Aut L_s|E$ is a subgroup of $\Aut L_s|K$. Since the latter is a $p$-group by
assumption, so is the former.

Since $L_s\cap F =E$ and $L_s|E$ is normal and separable, $F$ and $L_s$ are linearly disjoint over $E$ and it
follows that $\Aut L_s.F|F=\Aut L_s|E$, which shows that $L_s.F|F$ is a $p$-extension. Since $L|L_s$ is purely
inseparable, also $L.(L_s.F)=L.F$ is a purely inseparable extension of $L_s.F$, so $L_s.F|F$ is the maximal
separable subextension of $L.F|F$.
\end{proof}

\bn
%
%
\section{Proof of Theorem~\ref{GAL}}            \label{sectpfGAL}
In this and the next two sections, we will freely use the facts collected in Theorem~\ref{brt} as well as the
fundamental inequality (\ref{fi}) without citing them.

\pars
We assume the extensions $(F|K,v)$ and $(L|K,v)$ to be as in the introduction. Since $L|K$ is algebraic, $vL/vK$
is a torsion group.

Let us first assume that $vL\subseteq vF$ and that $(L,v)$ is contained in the absolute ramification field
$K^r$ of $(K,v)$, so $vL\subseteq vK^r$. Take any set $\{\beta_j\mid j\in J\}$ of generators of
$vL$ over $vK$, and let $n_j$ be positive integers such that $n_j\beta_j\in vK$ for each $j\in J$. Since
$\chara Kv$ does not divide the order of any element in $vK^r/vK$, the same holds for
$vL/vK$. Therefore, we can assume that $\chara Kv$ does not divide any of the $n_j\,$. Applying Lemma~\ref{l},
we can find elements $b_j\in L^i$ such that $vb_j=\beta_j$ and $c_j:=b_j^{n_j}\in K$.
%
Since $K^i\subseteq L^i$, we obtain that
\[
vL\>\subseteq\> vK^i(b_j \mid j\in J)\>\subseteq\> vL^i\>=\>vL\>,
\]
showing that equality must hold everywhere. Since $Lv|Kv$ is separable
by condition (TE2), we have that $K^iv = (Kv)\sep=(Lv)\sep=L^iv$ and thus,
\[
K^iv\>\subseteq\> K^i(b_j \mid j\in J)v\>\subseteq\> L^iv\>=\>K^iv\>,
\]
showing again that equality must hold everywhere. We have proved that
\[
(L^i|K^i(b_j \mid j\in J),v)
\]
is an immediate extension.

By assumption, $(L,v)$ is an extension of $(K,v)$ within the absolute ramification field $(K^r,v)$ of $(K,v)$.
Hence also $(L^i,v)$ is contained in $(K^r,v)$. Therefore, we can apply Corollary~\ref{cordl} to find that
\[
L^i\>=\>K^i(b_j \mid j\in J)\>.
\]

Since $K\subseteq F$, it follows that $K^i\subseteq F^i$.
Since $\beta_j\in vL\subseteq vF$, we know from Lemma~\ref{l} that
the polynomials $X^{n_j}-c_j$ split completely over $F^i$. Consequently,
we also have $b_j\in F^i$ for each $j\in J$. This yields that
\[
L\>\subseteq\> L^i\>=\>K^i(b_j \mid j\in J)\>\subseteq\> F^i\>.
\]
We conclude that
\[
L.F\>\subseteq\> F^i\>,
\]
so the extension $(L.F|F,v)$ is unramified.

Now we prove the assertion in the general case, where $vL$ is not necessarily a subgroup of $vF$. We construct an
extension $(F_1,v)$ of $(F,v)$ within its absolute ramification field $(F^r,v)$ such that $vF_1=vL+vF$. Take
$(F_1,v)$ to be a maximal extension of $(F,v)$ within $(F^r,v)$ such that $vF_1\subseteq vL+vF$; this exists by
Zorn's Lemma. We have to show that $vF_1=vL+vF$. Suppose otherwise and take an element $\beta\in vL\setminus
vF_1\,$. Let $n$ be the order of $\beta$ over $vF_1\,$; as it must be a divisor of the order of $\beta$ over
$vK$ and $(L,v)$ lies in the absolute ramification field of $(K,v)$, it is not divisible by $\chara Kv$. It
follows that $\beta\in vF_1^r$. Take an element $c\in F_1$ such that $vc=n\beta$.
%
%
Then by Lemma~\ref{l} there is some $b\in (F_1^r)^i=F_1^r=F^r$ such that $b^n=c$ and therefore, $vb=\beta$.
We compute:
\[
n\>=\>(vF_1+\Z\beta:vF_1)\>\leq\>(vF_1(b):vF_1)\>\leq\>[F_1(b):F_1]\>\leq\>n\>,
\]
so equality holds everywhere and we find that $vF_1(b)=vF_1+\Z\beta\subseteq vL+vF$. Since $b\notin F_1\,$, this
contradicts the maximality of $F_1\,$, showing that $vF_1=vL+vF$.

Now we apply what we have shown already to $F_1$ in place of $F$. Since now $vL\subseteq vF_1\,$, we find that
$L.F_1\subseteq F_1^i\subseteq F_1^r=F^r$ and
\[
v(L.F)\>\subseteq\>v(L.F_1)\>\subseteq\>vF_1^i\>=\>vF_1\>=\>vL+vF\>\subseteq\>v(L.F)\>,
\]
whence $v(L.F)=vL+vF$.
\pars
Assume that $vL$ is not a subgroup of $vF$. Then $vF\subsetneq vL+vF=v(L.F)$, so the extension $(L.F|F,v)$ is not
unramified. We have now proved part 1) of Theorem~\ref{GAL}.

\parm
For the proof of part 2) of Theorem~\ref{GAL}, we proceed in a similar way as for part 1), but on a
``lower level''. By hypothesis, $L\subseteq K^i$.
First, we assume that $Lv\subseteq Fv$. We take a set of generators $\{\zeta_j\mid j\in J\}$
of the separable-algebraic field extension $Lv|Kv$. Then we choose monic
polynomials $f_j\in K[X]$ such that the reduction $\ovl{f}_j$ of $f_j$
modulo $v$ is the minimal polynomial of $\zeta_j$ over $Kv$, for each
$j\in J$. Since $\zeta_j$ is a simple root of $\ovl{f}_j$, we can
use Hensel's Lemma to find a root $b_j\in L^h$ whose residue is
$\zeta_j\,$. Since $K^h\subseteq L^h$, we have that $K^h(b_j\mid j\in J)\subseteq L^h$ and
\[
Lv\>\subseteq\>K^h(b_j\mid j\in J)v\>\subseteq \> L^hv\>=\>Lv\>,
\]
showing that equality must hold. We also have that
\[
vL \>\subseteq\> vK^i\>=\>vK\>\subseteq\>vK^h(b_j\mid j\in J)\>\subseteq\>vL^h\>=\>vL\>,
\]
showing again that equality must hold. Thus, $(L^h|K^h(b_j\mid j\in J),v)$ is an immediate extension of henselian
fields inside of the absolute inertia field of $(K,v)$. Hence by Corollary~\ref{cordl} we obtain that
\[
L^h\>=\>K^h(b_j\mid j\in J)\>.
\]

Since $K\subseteq F$, it follows that $K^h\subseteq F^h$. Since
$\zeta_j \in Fv$ and $\zeta_j$ is a simple root of $\ovl{f}_j\,$, it
follows from Hensel's Lemma that $f_j$ has a root in $F^h$ with residue
$\zeta_j\,$; this root must be $b_j\,$. Consequently,
\[
L\>\subseteq\> L^h\>=\>K^h(b_j\mid j\in J)\>\subseteq\> F^h\>.
\]
We conclude that
\[
L.F\>\subseteq\> F^h\>,
\]
which implies that the extension $(L.F|F,v)$ is immediate.

\pars
Next, we prove the assertion in the general case, where $Lv$ is not necessarily a subfield of $Fv$. We construct an
extension $(F_1,v)$ of $(F,v)$ within its absolute inertia field $(F^i,v)$ such that $F_1v=Lv.Fv$. Take
$(F_1,v)$ to be a maximal extension of $(F,v)$ within $(F^i,v)$ such that $F_1v\subseteq Lv.Fv$; this exists by
Zorn's Lemma. We have to show that $F_1v=Lv.Fv$. Suppose otherwise and take an element $\zeta\in Lv\setminus
F_1v$. Since $(L,v)$ lies in the absolute inertia field of $(K,v)$ by hypothesis, $\zeta$ is separable-algebraic
over $Kv$ and hence also over $F_1v$. It follows that $\zeta\in F_1^i v$. Take a monic polynomial $f\in F_1[X]$
whose reduction $fv$ modulo $v$ is the minimal polynomial of $\zeta$ over $F_1v$ and note that $\zeta$ is a simple
root of $fv$. By Hensel's Lemma there is a root $z$ of $f$ in the henselian field $(F_1^i,v)$ such that $zv=
\zeta$. We compute:
\[
\deg f\>=\> \deg fv\>=\> [F_1v(\zeta):F_1v]\>\leq\> [F_1(z)v:F_1v]\>\leq\>[F_1(z):F_1]\>\leq\>\deg f\>,
\]
so equality holds everywhere and we find that $F_1(z)v=F_1v(\zeta)\subseteq Lv.Fv$. Since $z\notin F_1\,$, this
contradicts the maximality of $F_1\,$, showing that $F_1v=Lv.Fv$.

Now we apply what we have shown already to $F_1$ in place of $F$. Since now $Lv\subseteq F_1v\,$, we find that
$L.F_1\subseteq F_1^h\subseteq F_1^i=F^i$ and
\[
(L.F)v\>\subseteq\>(L.F_1)v\>=\>F_1^h v\>=\>F_1v\>=\>Lv.Fv\>\subseteq\>(L.F)v\>,
\]
whence $(L.F)v=F_1v=Lv.Fv$.

Finally, assume that $Lv$ is not a subfield of $Fv$. Then $Fv\subsetneq Lv.Fv=(L.F)v$, so the extension
$(L.F|F,v)$ is not immediate. We have now proved part 2) of Theorem~\ref{GAL}.

\bn
%
%
\section{Proof of Theorem~\ref{MT2}}            \label{sectpfMT2}
By assumption, $\chara Kv=p>0$. We let $L_i$, $L_r$ and $L_s$ be as introduced before Theorem~\ref{brt}. Since
$vL/vL_r$ is a $p$-group and no element of $vL^r/vK$ has order divisible by $p$, we have that $vL_r=(vL)_{p'}$.
Further, $L^i=L.K^i$ is a normal extension of $K^i$ and $L_s^i=L_s.K^i$ is a
Galois extension of $K^i$, with ramification field $L_r^i=L_r.K^i$; thus, $L_s^i|L_r^i$ is a $p$-extension.
\pars

We know that $L_s|L_r$ is a $p$-extension. By Proposition~\ref{p-ext}, this implies that also
$L_s.F|L_r.F$ is a $p$-extension. Since $L|L_s$ is purely inseparable,
it follows that also $L.F|L_s.F$ is purely inseparable. These two
facts imply that $v(L.F)/v(L_r.F)$ is a $p$-group, and that $(L.F)v/(L_r.F)v$ is a
normal extension with its maximal separable subextension being a $p$-extension. Since $v(L_r.F)=(vL)_{p'}+vF$ by
part 1) of Theorem~\ref{GAL}, the former proves part 1) of Theorem~\ref{MT2}.

\pars
Now assume that $(vL)_{p'}=vK$. This implies that $L_r=L_i$ and $L_r.F=L_i.F$. Hence from part 2) of
Theorem~\ref{GAL} it follows that $(L_r.F)v=(L_i.F)v=(Lv)_s.Fv$. Together with the facts about $(L.F)v/(L_r.F)v$
that we showed above, this proves part 2) of Theorem~\ref{MT2}.

\bn
%
%
\section{A closer analysis of the relevant ramification theory}    \label{sectrt}
Throughout this section we will assume that $L|K$ is a (not necessarily finite) Galois extension. Then also
$L.F|F$ is a Galois extension, and we denote by $\res$ the restriction of automorphisms in $\Aut L.F|F$ to $L$.
The following is a consequence of \cite{Neu} (see also \cite{Kubook}).
\begin{proposition}                             \label{res}
In the above situation, we have:
\pars
$\res G^d(L.F|F,v)\>\subseteq G^d(L|K,v)\>,$ \par
$\res G^i(L.F|F,v)\>\subseteq G^i(L|K,v)\>,$ \par
$\res G^r(L.F|F,v)\>\subseteq G^r(L|K,v)\>.$
\end{proposition}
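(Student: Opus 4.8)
The plan is to prove the three containments essentially in parallel, using the defining properties of the decomposition, inertia and ramification groups together with the fact that $\res$ is a group homomorphism from $\Aut L.F|F$ to $\Aut L|K$ (which makes sense because $L|K$ being Galois means $L$ is stable under every $F$-automorphism of $L.F$). The point in each case is simply that the defining condition on an automorphism $\sigma\in\Aut L.F|F$, which is a statement about all elements of $\mathcal O_{L.F}$ or $(L.F)^\times$, restricts to the corresponding condition on all elements of $\mathcal O_L$ or $L^\times$, since $L\subseteq L.F$ and the valuation on $L$ is the restriction of the valuation on $L.F$.

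Concretely, first I would fix $\sigma\in G^d(L.F|F,v)$, so $v\circ\sigma=v$ on $L.F$; restricting this identity of valuations to the subfield $L$ and noting that $\sigma$ maps $L$ into itself, we get $v\circ(\res\sigma)=v$ on $L$, i.e.\ $\res\sigma\in G^d(L|K,v)$. For the inertia groups, take $\sigma\in G^i(L.F|F,v)$, so $v(\sigma x-x)>0$ for every $x\in\mathcal O_{L.F}$; since $\mathcal O_L=\mathcal O_{L.F}\cap L\subseteq\mathcal O_{L.F}$ and $\sigma x=(\res\sigma)x$ for $x\in L$, the same inequality holds for all $x\in\mathcal O_L$, giving $\res\sigma\in G^i(L|K,v)$. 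The ramification case is identical with $L^\times\subseteq(L.F)^\times$ and the sharper inequality $v(\sigma x-x)>vx$. So all three inclusions reduce to the trivial observation that the universally quantified defining conditions only become weaker when one restricts the range of the quantifier to a subfield.

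The one genuine point that needs to be checked — and the only place where ``the above situation'' (i.e.\ $L|K$ Galois) is used — is that $\res$ is well defined, namely that every $\sigma\in\Aut L.F|F$ does indeed carry $L$ onto itself. This follows because $L|K$ is normal and separable and $\sigma$ fixes $K$ pointwise, hence permutes the (finite sets of) conjugates over $K$ of each element of $L$, so $\sigma(L)\subseteq L$, and applying the same to $\sigma^{-1}$ gives $\sigma(L)=L$; thus restriction is a group homomorphism $\Aut L.F|F\to\Aut L|K$. I expect this to be the only step requiring any care, and even it is standard Galois theory; the reference to \cite{Neu} (and \cite{Kubook}) presumably covers exactly this setup, so in the write-up I would simply invoke that $\res$ is the natural restriction homomorphism and then carry out the three short verifications above. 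There is no real obstacle here — the proposition is a direct unwinding of definitions — so the main drafting decision is just how much of the (routine) well-definedness of $\res$ to spell out versus cite.
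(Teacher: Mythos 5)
Your proof is correct, and it is exactly the standard argument that the paper delegates to the references (\cite{Neu}, \cite{Kubook}) rather than writing out: well-definedness of $\res$ from normality of $L|K$, followed by the observation that the universally quantified defining conditions for $G^d$, $G^i$, $G^r$ pass from $L.F$ to the subfield $L$ because $v$ on $L$ is the restriction of $v$ on $L.F$. No gaps.
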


We set $E:=L.F$, let $L_d$, $L_i$ and $L_r$ be as introduced before Theorem~\ref{brt}, and correspondingly
denote by $E_d\,,\,E_i\,,\,E_r$ the decomposition, inertia and
ramification field, respectively, of $(E|F,v)$. As a consequence of Proposition~\ref{res}, we obtain:
\begin{proposition}                             \label{res2}
With the above assumptions and notation, we have that
\[
L_d\subseteq E_d\cap L\>, \qquad L_i\subseteq E_i\cap L\>, \qquad L_r\subseteq E_r\cap L
\]
and
\[
L_d.F\subseteq E_d\>, \qquad L_i.F\subseteq E_i\>, \qquad L_r.F\subseteq E_r\>.
\]
\end{proposition}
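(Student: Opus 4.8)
The plan is to deduce Proposition~\ref{res2} directly from Proposition~\ref{res} together with the characterization of the decomposition, inertia and ramification fields as fixed fields of the corresponding groups. I will treat the three cases (decomposition, inertia, ramification) in parallel since the arguments are formally identical; write $G^\bullet$ for any one of $G^d, G^i, G^r$ and $L_\bullet$, $E_\bullet$ for the associated fixed fields.

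First I would establish the inclusions $L_\bullet \subseteq E_\bullet \cap L$. Since $L_\bullet$ is by definition the fixed field of $G^\bullet(L|K,v)$ inside $L$, and $L_\bullet \subseteq L$ is automatic, it suffices to show $L_\bullet \subseteq E_\bullet$. Take $x \in L_\bullet$ and any $\sigma \in G^\bullet(E|F,v)$. By Proposition~\ref{res}, $\res\sigma \in G^\bullet(L|K,v)$, so $\res\sigma$ fixes $x$; but $\res\sigma$ is just the restriction of $\sigma$ to $L$, and $x \in L$, so $\sigma x = x$. As $\sigma$ was an arbitrary element of $G^\bullet(E|F,v)$, this shows $x$ lies in the fixed field $E_\bullet$ of that group. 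Hence $L_\bullet \subseteq E_\bullet \cap L$.

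Next I would obtain $L_\bullet.F \subseteq E_\bullet$. We already have $L_\bullet \subseteq E_\bullet$ from the previous step, and $F \subseteq E_\bullet$ always holds because $E_\bullet$ is an extension of $F$ (the fixed fields of $(E|F,v)$ all contain $F$). Since $L_\bullet.F$ is by definition the smallest subfield of $M$ containing both $L_\bullet$ and $F$, and $E_\bullet$ is a subfield of $M$ containing both, we conclude $L_\bullet.F \subseteq E_\bullet$. This gives all six inclusions.

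There is no real obstacle here: the whole content has been front-loaded into Proposition~\ref{res}, which encodes the compatibility of ramification-theoretic subgroups with restriction along the compositum, and once that is in hand Proposition~\ref{res2} is a purely formal translation via Galois correspondence. The only point requiring a word of care is that $E_d, E_i, E_r$ are genuinely the fixed fields of $G^d(E|F,v), G^i(E|F,v), G^r(E|F,v)$ in $E$ and that each contains $F$ — both of which are immediate from the definitions recalled in Section~\ref{sectprel} applied to the Galois extension $(E|F,v)$.
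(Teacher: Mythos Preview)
Your argument is correct and is exactly the deduction the paper has in mind: the paper states Proposition~\ref{res2} simply as a consequence of Proposition~\ref{res} without writing out a proof, and what you have written is precisely the routine Galois-correspondence translation of the group inclusions into fixed-field inclusions.
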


We wish to give examples that show that the inclusion may be strict, even if $F|K$ is finite. In fact, this
phenomenon occurs in all instances of elimination of tame or wild ramification.
\begin{example}
{\rm We build on a famous example for an extension with nontrivial defect (see, e.g., \cite{Kudef}). We take
$(K,v)$ to be the perfect hull of the Laurent series field $\F_p((t))$ over the field $\F_p$ with $p$ elements.
We let $\vartheta$ be a root of the Artin-Schreier polynomial $X^p-X-1/t$. As $(K,v)$ is henselian, there is a
unique extension of $v$ to $K(\vartheta)$. Then $(K(\vartheta)|K,v)$ is an immediate Galois extension of degree
$p$, hence has nontrivial defect. The same is true for the extension $(K(\vartheta+a)|K,v)$ where $a$ is a root of
$X^p-X-1$. We set $L=K(\vartheta)$ and $F=K(\vartheta+a)$. We obtain that $L.F=F(a)$. Since $\F_p(a)|\F_p$ is a
separable extension of degree $p$, we see that $L.F=(L.F|F,v)^i$. But as $(K(\vartheta)|K,v)$ has nontrivial
defect, $(K(\vartheta),v)$ does not lie in $K^r$, and consequently, $L_r=K$. With the notation introduced above,
we conclude that $K=L_d=L_i=L_r\subsetneq L\,$, but $F=E_d\subsetneq E_i=E_r=E$ and therefore, $F=L_i.F\subsetneq
E_i$ and $F=L_r.F \subsetneq E_r\,$. }       \diams
\end{example}
\n
This example shows that the $p$-extension mentioned in part 2) of Theorem~\ref{MT2} can be nontrivial even if
$Lv=(Lv)_s=Kv$ and hence $(Lv)_s.Fv=Fv$. In this example, we have in fact eliminated wild ramification, since
$E_r=E$; the wild ramification was turned into a tame unramified extension. It should be noted at this point that
eliminating wild ramification cannot increase tame ramification:
\begin{remark}
If $E_r=E$, then $vE=(vL)_{p'}+vF$. This follows from part 1) of Theorem~\ref{MT2} which states that $vE/
((vL)_{p'}+vF)$ is a $p$-group. But as no element in $vE_r/vF$ has a order divisible by $p$, the group $vE/
((vL)_{p'}+vF)$ must be trivial.
\end{remark}

\pars
The next example is a basic example of the elimination of tame ramification:
\begin{example}
{\rm We take $K=k(t,x)$ and $v$ to be the $t$-adic valuation on $K$. Then $vK=\Z$ and $Kv=k(x)$. We choose an
integer $n>1$ which is not divisible by $\chara k$, and $n$-th roots $t^{1/n}$ and $x^{1/n}$ of $t$ and $x$,
respectively. We assume that $k$ contains a primitive $n$-th root of unity and set $L=K(t^{1/n})$ and
$F=K(t^{1/n}x^{1/n})$, so that $L.F=F(x^{1/n})=(L.F|F,v)^i$. In this
situation, we have that $K=L_d=L_i\subsetneq L_r=L\,$, but $F=E_d\subsetneq E_i=E_r=E$ and therefore,
$F=L_i.F\subsetneq E_i$ and $F\subsetneq L_r.F = E_i\,$. }       \diams
\end{example}

Finally, we give an example where a separable extension of the residue field is eliminated. This corresponds to a
well known procedure using Hensel's Lemma within the henselization of $(F,v)$.
\begin{example}
{\rm We take $(K,v)$ to be as in the previous example, assuming in addition that $\chara Kv=p>0$. We let $a$ be a
root of the Artin-Schreier polynomial $X^p-X-x$, and $b$ a root of $X^p-X-x-t$. We set $L=K(a)$ and $F=K(b)$. We
obtain that $L.F=F(b-a)$. Since $b-a$ is a root of the polynomial $X^p-X-t$ and $vt>0$, $b-a$ lies in the
henselization of $(F,v)$ and it follows that $L.F=E_d\,$. In this situation, we have that $K=L_d\subsetneq L_i=
L_r=L\,$, but $F\subsetneq E_d=E_i=E_r=E$ and therefore, $F\subsetneq L_i.F=E_d=E\,$.}       \diams
\end{example}

\bn
%
%
\section{Examples with rational function fields $F=K(x)$}                           \label{sectex}
\begin{example} \rm
We take a valued field extension $(K(a)|K,v)$ such that $a^n\in K$, the order of $va$ modulo $vK$ is $n$ and $n$ is
not divisible by $\chara Kv$. It follows that $vK(a)=vK+\Z va$ and $K(a)v=Kv$. We set $L:=K(a)$. Further, we
consider the Gau{\ss} valuation $v$ on the rational function field $L(y)$, that is,
\[
v\sum_{i=0}^{k} a_i y^i \>:=\> \min \{va_i\mid 0 \leq i \leq k \}\>.
\]
We choose some $d\in K$ such that $vd>va$ and set $x:=a+dy$, so $K(x)$ is a rational function field contained in
$L(y)$. We consider $K(x)$ equipped with the restriction of the valuation $v$ of $L(y)$.

We wish to prove that $L\subset K(x)^h$. We observe that $x/a$ and $x^n/a^n$ are 1-units and that $x/a$ is a root
of the polynomial
\begin{equation}                             \label{pol}
X^n-\frac{x^n}{a^n}\,\in K(x)[X]
\end{equation}
whose reduction modulo $v$ is $X^n-1$. Since $n$ is not divisible by $\chara Kv$, $1$ is a simple root of this
polynomial and Hensel's Lemma shows that $K(x)^h$ contains a unique root $z$ of (\ref{pol}) with residue $1$.
Consequently, $z=x/a$, whence $a=x/z\in K(x)^h$. This proves that $L\subset K(x)^h$.       \diams
\end{example}

Modifications of this example can be obtained by choosing different extensions of $v$ from $L$ to $L(y)$. For
example, one can define
\begin{equation}                                  \label{extv}
v\sum_{i=0}^{k} a_i y^i \>:=\> \min \{va_i +ivd\mid 0 \leq i \leq k \}\>.
\end{equation}
where again $d\in K$ with $vd>va$. In this case we set $x:=a+y$ and proceed as in the example. Note that in both
constructions, $K(x)v$ is transcendental over $Kv$; in this case the extensions $(K(x)|K,v)$ are called
\bfind{residue transcendental}. In the example, we have that $L(y)v=Lv(yv)=Kv(yv)$ is
transcendental over $Kv$ and since $L(x)|K(x)$ is algebraic, the same must be true for $K(x)v$. In the modified
construction we have that $L(y)v=Lv((y/d)v)=Kv((y/d)v)$.

\parm
A similar example can be produced with a \bfind{value transcendental} extension $(K(x)|K,v)$ where
$vK(x)/vK$ has rational rank $1$. To achieve this, one replaces $vd$ in definition (\ref{extv}) by some value
$\alpha>va$ which is non-torsion over $vK$. A particular case of this is obtained when one takes $v_y$ to be the
$y$-adic valuation on $L(y)$ and then sets the composition $v_y\circ v$ to be the extension of $v$ from $L$ to
$L(y)$.

\parm
In all of the above examples the extension $(K(a)|K,v)$ was such that $vK(a)=vK+\Z va$ and $K(a)v=Kv$. However,
the examples work in exactly the same way when we assume that $a^n\in K$, $va=0$, $[Kv(av):Kv]=n$ and $n$ is
not divisible by $\chara Kv$. It then follows that $vK(a)=vK$ and $K(a)v=Kv(av)$. In this case it is not
tame ramification that is eliminated, but a separable-algebraic extension of the residue field instead.

\bn
%
%
\section{Abhyankar's Lemma using ramification indices}           \label{sectALri}
Theorem~\ref{Abhyankar lemma} is a consequence of the more general version of Abhyankar's Lemma stated in
Lemma 15.102.4 [\href{https://stacks.math.columbia.edu/tag/0EXT}{Tag 0EXT}] of \cite{SP}. Indeed, in the setup
of Lemma 15.102.4 [\href{https://stacks.math.columbia.edu/tag/0EXT}{Tag 0EXT}] and Remark 15.102.1
[\href{https://stacks.math.columbia.edu/tag/0EXT}{Tag 0EXT}], we note that the assumptions that gcd$(e,p) = 1$
and $\kappa_B / \kappa_A$ is separable still hold when the valued field extension $L/K$ is tamely ramified.
Further, $A_1$ is a discrete valuation ring of rank 1 by $(4)$ of Remark 15.102.1
[\href{https://stacks.math.columbia.edu/tag/0EXT}{Tag 0EXT}]. Finally, from Definition 15.109.1
[\href{https://stacks.math.columbia.edu/tag/0ASF}{Tag 0ASF}] and Lemma 15.99.5
[\href{https://stacks.math.columbia.edu/tag/09E7}{Tag 09E7}], it follows that the formally smooth conclusion in
Lemma 15.102.4 [\href{https://stacks.math.columbia.edu/tag/0EXT}{Tag 0EXT}] implies that the extension is
unramified.

\parm
We will now show how Theorem~\ref{GALrr1} can be deduced from Theorem~\ref{GAL}. We will need the following
preparation. If $\Delta$ is a torsion free abelian group and $e>0$ is an integer, then $\frac 1 e \Delta$ will
denote the abelian group consisting of all $\alpha$ in the divisible hull of $\Delta$ such that $e\alpha\in\Delta$.
\begin{lemma}                             \label{ratgr}
Take an integer $e>0$, a torsion free abelian group $\Delta$ of rational rank 1, and a subgroup $\Gamma$ of its
divisible hull such that $\Delta\subseteq\Gamma$ and $(\Gamma:\Delta)=e$. Then $\Gamma=\frac 1 e \Delta$.
\end{lemma}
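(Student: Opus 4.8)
The plan is to show the two inclusions $\Gamma\subseteq\frac1e\Delta$ and $\frac1e\Delta\subseteq\Gamma$ separately; the first is essentially formal, and the second is where the rational rank 1 hypothesis does the work. For the easy inclusion: take any $\gamma\in\Gamma$. Since $(\Gamma:\Delta)=e$, the coset $\gamma+\Delta$ has finite order dividing $e$ in the quotient $\Gamma/\Delta$, so $e\gamma\in\Delta$, which is exactly the statement that $\gamma\in\frac1e\Delta$. Hence $\Gamma\subseteq\frac1e\Delta$ with no use of rational rank. Note this also shows $\Delta\subseteq\Gamma\subseteq\frac1e\Delta$, and since $(\frac1e\Delta:\Delta)$ is finite (it is a quotient of $(\Z/e\Z)^{\mathrm{rr}(\Delta)}$-type data), all three groups are commensurable.

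For the reverse inclusion I would argue by comparing indices. First observe that $(\frac1e\Delta:\Delta)=e$: this is precisely where rational rank 1 is used. Fix any nonzero $\delta_0\in\Delta$; then $\Q\delta_0$ is the divisible hull of $\Delta$, and $\delta\mapsto$ (the rational coefficient of $\delta$ relative to $\delta_0$) embeds $\Delta$ into $\Q$ as a subgroup $\Delta'$ of rational rank 1. Under this embedding $\frac1e\Delta$ maps to $\frac1e\Delta'$, and one checks by a short multiplicative/divisibility argument on $\Q$ that $(\frac1e\Delta':\Delta')=e$ exactly — the point being that a rank-1 subgroup of $\Q$ is ``$e$-divisible in at most the obvious way,'' i.e. the map $x\mapsto ex$ on $\frac1e\Delta'/\Delta'$ is injective onto $0$ and the quotient is cyclic of order $e$. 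Now we have $\Delta\subseteq\Gamma\subseteq\frac1e\Delta$ with $(\Gamma:\Delta)=e=(\frac1e\Delta:\Delta)$, and by multiplicativity of indices $(\frac1e\Delta:\Gamma)(\Gamma:\Delta)=(\frac1e\Delta:\Delta)$, forcing $(\frac1e\Delta:\Gamma)=1$, i.e. $\Gamma=\frac1e\Delta$.

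The main obstacle, and the only place any real content sits, is the claim $(\frac1e\Delta:\Delta)=e$; this genuinely fails without the rational rank 1 hypothesis (for $\Delta\isom\Z^r$ one gets index $e^r$), so the proof must invoke it precisely here. I would handle it by reducing to subgroups of $\Q$ as above; the residual check there is elementary — given $\Delta'\subseteq\Q$ of rational rank $1$, the quotient $\frac1e\Delta'/\Delta'$ is generated by $\frac{\delta}{e}+\Delta'$ for a single $\delta\in\Delta'$ with $\gcd$-type minimality, and its order is $e/\gcd(e,\text{(relevant divisibility of }\Delta'\text{)})$, which equals $e$ once one notes that passing to $\frac1e\Delta$ can only have "used up" divisibility already present, while $\Delta\subseteq\frac1e\Delta$ is proper of index dividing $e$ and at least... — in practice the cleanest route is: the natural surjection $\Delta\to\frac1e\Delta/\Delta$ sending $\delta\mapsto\frac\delta e+\Delta$ is well-defined with kernel $e\Delta$, giving $\frac1e\Delta/\Delta\isom\Delta/e\Delta\isom\Z/e\Z$ because $\Delta$ has rational rank $1$ and is torsion free, hence $\Delta/e\Delta$ is cyclic of order exactly $e$. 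That last isomorphism $\Delta/e\Delta\isom\Z/e\Z$ is the crisp form of the hypothesis and is what I would state and use.
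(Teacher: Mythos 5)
Your strategy is the same as the paper's: first $\Gamma\subseteq\frac 1e\Delta$ from $(\Gamma:\Delta)=e$, then compare $(\frac 1e\Delta:\Delta)$ with $e$ to conclude. The observation that $\delta\mapsto\frac\delta e+\Delta$ gives an isomorphism $\Delta/e\Delta\isom\frac 1e\Delta/\Delta$ is correct and a clean reduction. The gap is the claim that $\Delta$ torsion free of rational rank $1$ forces $\Delta/e\Delta\isom\Z/e\Z$. This is false. Take $\Delta=\Z[1/p]$ and $e=p$: then $p\Delta=\Delta$, so $\Delta/e\Delta=0$, and indeed $\frac 1e\Delta=\Delta$ so the index is $1$, not $e$. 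The same failure occurs whenever $\Delta$ is $q$-divisible for some prime $q\mid e$, and in particular for $\Delta=\Q$. What is true (and what the lemma actually needs) is only the inequality $(\frac 1e\Delta:\Delta)\leq e$, equivalently that $\Delta/e\Delta$ is cyclic of order \emph{dividing} $e$; the hypothesis $(\Gamma:\Delta)=e$ together with $\Gamma\subseteq\frac 1e\Delta$ then forces the index to be exactly $e$, and the lemma is vacuous in the cases where the index is smaller. Your index-multiplicativity step survives this weakening: from $(\frac 1e\Delta:\Gamma)\cdot e=(\frac 1e\Delta:\Delta)\leq e$ one still gets $(\frac 1e\Delta:\Gamma)=1$.

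The remaining obstacle is therefore to actually prove $(\frac 1e\Delta:\Delta)\leq e$, and you do not supply an argument for that — the assertion about $\Delta/e\Delta$ was supposed to do this job but is wrong as stated. This bound is precisely the nontrivial content of the lemma: the paper proves it directly by taking any $e+1$ elements of $\frac 1e\Delta$, clearing denominators, and applying a pigeonhole argument in $r\Z/er\Z$ to show two of them are congruent modulo $\Delta$. Your reduction to $\Delta/e\Delta$ could instead be completed by showing, for each prime power $p^a\|e$, that $\Delta/p^a\Delta$ is either trivial (if $\Delta$ is $p$-divisible) or $\isom\Z/p^a\Z$ — for instance by localizing at $p$, where $\Delta\otimes\Z_{(p)}$ is a fractional ideal of the DVR $\Z_{(p)}$ — and then invoking CRT. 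Either route works; as written, the proof asserts a false isomorphism and leaves the real inequality unproved.
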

\begin{proof}
As $\Delta$ of rational rank 1, it can be embedded in $\Q$ by sending any nonzero element in $\Delta$ to $1$, and
the divisible hull of $\Delta$ can be identified with $\Q$. As $(\Gamma:\Delta)=e$, we have that $\Gamma\subseteq
\frac 1 e \Delta$. We wish to show that $(\frac 1 e \Delta:\Delta)=e$, which then yields that $\Gamma=\frac 1 e
\Delta$. It suffices to show that $(\frac 1 e \Delta:\Delta)\leq e$.

Take any $e+1$ many elements $\alpha_1,\ldots,\alpha_{e+1}\in \frac 1 e \Delta$; we have to show that at least
two of them have the same coset modulo $\Delta$. As these elements are rational numbers, we can multiply them by
a common denominator $s$ to obtain integers $s\alpha_1,\ldots,s\alpha_{e+1}\,$. The ideal they generate in $\Z$
is principal, equal to, say, $r\Z$. We know that $(\Z:e\Z)=e$ and hence also $(r\Z:er\Z)=e$. Thus there are
distinct $i,j\in\{1,\ldots,e+1\}$ such that $s\alpha_i-s\alpha_j\in er\Z$. This implies that $\alpha_i-\alpha_j\in
e\frac r s \Z$. Since the elements $s\alpha_1,\ldots,s\alpha_{e+1}$ generate the group $r\Z$, the elements
$\alpha_1,\ldots,\alpha_{e+1}$ generate the group $\frac r s \Z$, which shows that $\frac r s \Z\subseteq
\frac 1 e \Delta$, whence $\alpha_i-\alpha_j\in e\frac r s \Z\subseteq\Delta$. Therefore, $\alpha_i$ and $\alpha_j$
have the same coset modulo $\Delta$.
\end{proof}

As mentioned in the introduction,
the assumption that $(L.K^h|K^h,v)$ is tame yields that $(L.K^h,v)$ lies in the absolute ramification field of
$(K^h,v)$, which is equal to the absolute ramification field of $(K,v)$. Since $vK$ has rational rank 1,
Lemma~\ref{ratgr} shows that the
value group of $(L,v)$ is $\frac{1}{(vL:vK)} vK$, and likewise, the value group of $(F,v)$ is $\frac{1}{(vF:vK)}
vK$. Now we infer from Theorem~\ref{GAL} that
\[
v(L.F) \>=\> \frac{1}{(vL:vK)} vK\>+\> \frac{1}{(vF:vK)}vK\>.
\]
If $\ell$ is the least common multiple of $(vL:vK)$ and $(vF:vK)$, then the right hand side is equal to
$\frac{1}{\ell}vK$. This proves Theorem~\ref{GALrr1}.

\parm
We wish to investigate how far Theorem~\ref{Abhyankar lemma} can be generalized while keeping the use of
ramification indices. We note that if $q$ is a prime and $a,b\in K\ac$ such that $a^q,b^q\in K$, then $va,vb
\in \frac{1}{q}vK$, and that $\frac{1}{q}vK / vK$ is an $\F_q$-vector space.
\begin{lemma}                         \label{2ri}
Take a valued field $(K,v)$ and an extension of $v$ to the algebraic closure $K\ac$ of $K$. Assume that there are
$a,b\in K\ac$ with $va,vb\notin vK$ and a prime $q$ such that $a^q,b^q\in K$ and $va+vK$ and $vb+vK$ are
$\F_q$-linearly independent elements in $\frac{1}{q}vK/vK$.
Then we have that $(vK(a):vK)=q=(vK(b):K)$ and that
\begin{equation}                         \label{vKab}
(vK(a,b):vK(a))\>=\>q\>=\>(vK(a,b):K(b))\>.
\end{equation}
\end{lemma}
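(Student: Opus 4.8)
The plan is to reduce everything to the fundamental inequality (\ref{fi}) together with the $\F_q$-vector space structure of $\frac 1 q vK/vK$ noted just before the lemma.

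First I would deal with $K(a)$ and $K(b)$ separately. Since $a^q\in K$, the element $a$ is a root of $X^q-a^q\in K[X]$, so $[K(a):K]\leq q$. On the other hand $q\,va\in vK$ because $a^q\in K$, while $va\notin vK$, so as $q$ is prime the order of $va+vK$ is exactly $q$; hence $(vK(a):vK)\geq q$. Feeding these bounds into (\ref{fi}) forces $q=(vK(a):vK)=[K(a):K]$, and since $vK+\Z va\subseteq vK(a)$ already has index $q$ over $vK$ we conclude that $vK(a)=vK+\Z va$. The same reasoning gives $(vK(b):vK)=q$ and $vK(b)=vK+\Z vb$.

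Next I would treat the compositum $K(a,b)$. Again $b$ is a root of $X^q-b^q\in K(a)[X]$, so $[K(a,b):K(a)]\leq q$; and once I know that $vb\notin vK(a)$, the same order argument as above (using $q$ prime and $q\,vb\in vK\subseteq vK(a)$) gives $(vK(a,b):vK(a))\geq q$, so (\ref{fi}) pins down $(vK(a,b):vK(a))=q$. Interchanging $a$ and $b$ then yields (\ref{vKab}). Thus the whole proof comes down to the single point $vb\notin vK(a)$: by the previous paragraph $vK(a)/vK$ is exactly the line $\F_q\cdot(va+vK)$ inside $\frac 1 q vK/vK$, so $vb\in vK(a)$ would make $vb+vK$ an $\F_q$-multiple of $va+vK$, contradicting the assumed $\F_q$-linear independence of $va+vK$ and $vb+vK$.

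I do not expect a real obstacle here; the only slightly delicate move is the identification $vK(a)=vK+\Z va$, which is what lets the abstract linear-independence hypothesis be converted into the concrete statement $vb\notin vK(a)$. Everything else is routine bookkeeping with the fundamental inequality and the fact that a prime order group has no proper nontrivial subgroups.
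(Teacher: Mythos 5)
Your proposal is correct and follows essentially the same route as the paper: pin down $vK(a)=vK+\Z va$ (and likewise for $b$) via the chain $(vK(a):vK)\leq[K(a):K]\leq q=(vK+\Z va:vK)\leq(vK(a):vK)$, then use the $\F_q$-linear independence to conclude $vb\notin vK(a)$ and repeat the index computation for the compositum. No gaps.
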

\begin{proof}
We compute:
\[
(vK(a):vK)\>\leq\>[K(a):K]\>\leq\>q\>=\>(vK+\Z va:vK)\>\leq\>(vK(a):vK)\>.
\]
Thus, equality holds everywhere, showing that $(vK(a):vK)=q$. In a similar way, one shows that $(vK(b):vK)=q$.
Further, the equality $(vK+\Z va:vK)=(vK(a):vK)$ shows that $vK(a)=vK+\Z va$. Similarly, it is shown that
$vK(b)=vK+\Z vb$. Obviously, $va,vb\in vK(a,b)$. However, since $va+vK$ and $vb+vK$ are $\F_q$-linearly
independent elements in $\frac{1}{q}vK/vK$, we have that $va\notin vK+\Z vb= vK(b)$ and $vb\notin vK+\Z va =
vK(a)$. As $q$ is a prime, we conclude that $(vK(a,b):vK(b))\>\geq\>q$
and $(vK(a,b):vK(a))\>\geq\>q$, and with similar inequalities as above, one proves that (\ref{vKab}) holds.
\end{proof}
\sn
This lemma shows that Theorem~\ref{Abhyankar lemma} will fail as soon as there exist a prime $q$ different from
the residue characteristic and two values $\alpha, \beta\in vK$ such that both are not divisible by $q$ in $vK$
and $\alpha/q+vK$ and $\beta/q+vK$ are $\F_q$-linearly
independent elements in $\frac{1}{q}vK/vK$. Then one can pick $a,b\in
K\ac$ such that $a^q,b^q\in K$ with $va^q=\alpha$ and $vb^q=\beta$. It follows that $a,b\notin K$, so these
elements satisfy the assumptions of Lemma~\ref{2ri}.

Quick examples for the above situation are valued
fields $(K,v)$ for which $vK$ is isomorphic to $\Z^n$ with $n>1$, endowed with any ordering. These include all
generalized discretely valued fields with $n>1$.

\bn
\bn
\bn

\end{document}